\def\S{\mathbb{S}}
\def\R{\mathbb{R}}
\def\s{\boldsymbol{s}}
\def\bZ{\boldsymbol{Z}}
\def\bS{\mathcal{S}}
\def\bZ{\boldsymbol{Z}}
\def\bsy{\boldsymbol}
\def\iid{\overset{\mathrm{iid}}{\sim}}
\numberwithin{equation}{section}
\theoremstyle{plain}
\newtheorem{thm}{Theorem}[section]
\theoremstyle{remark}
\begin{document}
\graphicspath{{./figures/}}
\DeclareGraphicsExtensions{.pdf,.png}


\baselineskip=28pt \vskip 5mm
\begin{center} {\LARGE{\bf Modeling Temporally Evolving and Spatially Globally Dependent Data}}
\end{center}

\begin{center}\large
Emilio Porcu,\footnote{\baselineskip=12pt
School of Mathematics and Statistics, University of Newcastle, GB. \\
$\&$  Department of Mathematics, University Federico Santa Maria, 2360102 Valparaiso, Chile.\\
E-mail: georgepolya01@gmail.com. \\
Research work of Emilio Porcu   was partially supported by grant
FONDECYT 1130647 from the Chilean government.
}
Alfredo Alegria \footnote{ \baselineskip=10pt
Department of Mathematics, University Federico Santa Maria, 2360102 Valparaiso, Chile.\\
E-mail: alfredo.alegria.jimenez@gmail.com.
} and 
Reinhard Furrer \footnote{ \baselineskip=10pt
Department of Mathematics and Department of Computational Science, University of Zurich, 8307 Zurich, Switzerland. \\
E-mail: reinhard.furrer@math.uzh.ch.
}

\end{center}

\baselineskip=19pt \vskip 2mm \centerline{\today} \vskip 2mm

\vspace{2cm}

\begin{abstract}\baselineskip=12pt \vskip 2mm \centerline{\today} \vskip 2mm
The last decades have seen an unprecedented increase in the availability of data sets that are inherently global and temporally evolving, from remotely sensed networks to climate model ensembles.
This paper provides a view of statistical modeling techniques for space-time processes, where space is the sphere representing our planet. In particular, we make a distintion between
(a) second order-based, and (b) practical approaches to model temporally evolving global processes. The former are based on the specification of a class of space-time covariance functions, with space being the two-dimensional sphere. The latter are based on explicit description of the dynamics of the space-time process, i.e., by specifying its evolution as a function of its past history with added spatially dependent noise. \\
We especially focus on approach (a), where the literature has been sparse. We provide new models of space-time covariance functions for random fields defined on spheres cross time. 
Practical approaches, (b), are also discussed, with special emphasis on models built directly on the sphere, without projecting the spherical coordinate on the plane. \\
We present a case study focused on the analysis of air pollution from the 2015 wildfires in Equatorial Asia, an event which was classified as the year's worst environmental disaster. 
The paper finishes with a list of the main theoretical and applied research problems in the area, where we expect the statistical community to engage over the next decade.

\end{abstract}

{\em Keywords}:   Covariance functions; Great Circle; Massive Data Sets; Spheres. 


\newpage 

\section{Introduction}
The strong evidence of a changing climate over the last century \citep{ip13} has prompted the scientific community to seek for comprehensive monitoring strategies of the state of the climate system over the entire planet. The surge of satellite observations, the increase of computational and storage availability as well as an increase in the horizontal resolution of global climate models, the deployment of new global and automated network \citep*[the ARGO floats and the AERONET, see][]{aero} and the recent development of smartphone-based data which potentially allow every user on the planet to provide scientific data (Citizen Science) has generated an increase in the size of the data of orders of magnitude. Such an increase in the volume, variate and velocity of globally indexed data serves as a strong catalyst for the statistical community to develop models that are inherently global and time evolving.

The research questions underpinning global space-time models span from optimal interpolation (kriging) for global coverage over both space and time (for variables such as temperature and precipitation, but, also more recently, ozone, carbon dioxide and aerosol optical depth), to interpolation in the input space of Earth System Models by generating fast approximations (emulators) that can be used to perform
 fast and affordable sensitivity analysis. While the aforementioned topics are of high scientific interest, the development of appropriate statistical methodology for global and temporal data has been limited, and has advanced in two seemingly very different directions.

The construction of models on the sphere cross the temporal horizon calls for a rigorous development of a theory that would allow for valid processes with the proper distance over the curved surface of the planet. Under the assumption of Gaussianity, the second order structure can be explicitly specified and the properties of the process can be studied directly from its functional form. The theory for this approach has been actively developed over the last decade, but thus far has been limited to the large-scale structure for the covariance function such as isotropy or stationarity across longitudes. We denote this approach the \textsl{second order-based approach}. 


Alternative definitions of the space-time process rely on either on the de-coupling of the spatial and temporal part through the specification of the dynamics in time, or its representation as a solution of a stochastic partial differential equation (SPDE). These techniques are designed primarily for inferential purposes, and they are particularly suitable for modern massive data sets. This increased suitability for inference, however, comes at at the expense of convenient expressions that allow for an understanding the underlying theoretical properties of the process. We refer to this as the \textit{practical approach}.

We start by describing the second order-based approaches, which rely on the first and second order moment of the underlying random field on the sphere cross time. In particular, the focus becomes the space-time covariance, where space is the two-dimensional sphere. For stochastic processes on a sphere, the reader is referred to \cite{jones}, \cite{marinucci-peccati} and the thorough review in \cite{gneiting2}. For space-time stochastic processes on the sphere, we refer the reader to the more recent approaches in \cite{Porcu-Bevilacqua-Genton, berg-porcu} and \cite{jeong-jun}. Generalizations to multivariate space-time processes have been considered in \cite{alegria}. The increasing interest in modeling stochastic processes over spheres or spheres cross time with an explicit covariance function is also reflected in work in areas as diverse as mathematical analysis \citep{schoenberg, gangolli, hannan, Menegatto-strict, Menegatto-strict2, chen,  menegatto-peron, beatson, guella1, Guella3, Barbosa, Guella2}, probability theory \citep{baldi-marinucci, lang-schwab, Hansen, clarke}, spatial point processes \citep{moller}, spatial geostatistics \citep{papanicolau, gneiting1, hitz, huang, Gerb:Moes:Furr:17}, space-time geostatistics \citep{erosthe2, erosthe, bogaert, Porcu-Bevilacqua-Genton, berg-porcu} and mathematical physics \citep{istas, leonenko, Maliarenko}.  

The natural metric to be used on the sphere is the geodesic or great circle distance (details are explained in subsequent sections). 
However, if this metric is used in space-time covariance models defined on Euclidean spaces cross time, these are generally not valid on the sphere cross time. The resulting need for new theory of space-time covariance functions has motivated a rich literature based on positive definite functions based on great circle distance.

Techniques based on covariance functions are certainly accurate both in terms of likelihood inference and kriging predictions; yet they imply a high computational cost when dealing with large datasets over the space-time domain. The main computational hurdle is the calculation of the determinant and of the quadratic form based on the inverse of the covariance matrix. The so called {\em big $N$ problem} in this case requires a compromise between statistical and computational efficiencies and the reader is referred to, e.g., \cite{bevb:12, Bevilacqua2, stein-jrssb} and \cite{FGN}, for statistical approaches based on the covariance functions, that aim to mitigate such a computational burden. A notable approach for the problem of prediction for very large data sets can be found in \cite{cressie_a3}.  

As for practical approaches, when analyzing modern, massive data sets arising from remotely sensed data, climate models or reanalysis data products, a model that fully specifies the covariance function would require a prohibitive amount of information to be stored in the covariance matrix, as well as a prohibitive number of flops and iterations for maximizing the likelihood or exploring the Markov chain when performing Bayesian inference. 

In the context of data in Euclidean spaces, one of the most popular and natural alternatives is to explicitly describe the dynamics of the process by specifying the evolution as a function of its past history with added spatially dependent noise. This approach has received strong support from reference textbooks in spatio-temporal modelling \citep[see][]{crossi} and recent studies have extended this methodology to the context of spatio-temporal data.  \cite{richardson, richardson2} and \cite{tebaldi-Sanso}, amongst others,  recommend 
the use of an explicit description of the dynamics of the process by specifying its evolution as a function of the spatial distribution of the process. Dynamic spatio-temporal models have a long history in Euclidean spaces \citep[][with the references therein]{crossi}, but the literature on the sphere is more sparse, with the exception of \cite{cas13} and related work. This paper will review the recent literature of this approach, with a particular emphasis on scalable methods for large datasets.  Dynamics on large regions of Earth’s surface have been studied in \cite{cressie_a1}, who consider aerosol data from multiangle imaging spectro radiometers along the Americas, the Atlantic Ocean, and the western part of Europe and North Africa. Other relevant applications are proposed in \cite{oleson, cressie_a2, bgfs}.

In recent years, a powerful modeling approach has emerged based on the consideration of a space-time global process as a solution to a SPDE defined over the sphere and in time. 
Earlier work proposed in \cite{jones-zhang} was based on a diffusion-injection equation, which is just one of a multitude of SPDE-based models commonly used to describe physical processes \citep{erosthe}.
Later studies have focused on specifying an SPDE over the sphere or, more generally, on Riemannian geometries, and more recently under the Integrated Nested Laplace Approximation (INLA) environment \citep{rt:02, Lindgren, rue-inla, cameletti}. The key intuition is to use the SPDE approach as a link to approximate a continuous stochastic process with a Gaussian Markov random field, being a discretized version of the first. This has remarkable computational advantages \citep[see][]{Lindgren} and also allows to build flexible models by providing flexible functional expression of the differential operator. Some notable approaches to model global data under this framework can be found in \cite{bolin}. Although there has been substantial effort to introduce the Markov field architecture coupled with the INLA routine for spatial data, to our knowledge, only few studies have focused on the sphere cross time, which will be surveyed below.


The plan of the paper is the following.  Section 2 provides the necessary background material on random fields over spheres or spheres cross time, with their covariance functions. Section 3 details the \textit{second order-based approach}, with construction principles for characterizing space-time covariance functions on the sphere cross time domain.  New covariance functions are also introduced with formal proofs given in the Appendix. Section 4 is devoted to {\em practical approaches}, with emphasis on dynamical methods as well as methods based on SPDE and Gaussian Markov random fields. Section 5 presents a case study focused on the analysis of air pollution from the 2015 wildfires in Equatorial Asia, an event which was classified as the year's worst environmental disaster. A massive global space-time data of air quality from NASA's MERRA2 reanalysis with more than 12 million data points is provided, and both approaches from (a) and (b) are compared and their relative merits are discussed. Section 6 completes the paper with discussion, and with a list of research problems in the area, where we expect the statistical community to engage over the next decade. Technical proofs are deferred to the Appendix, where we also give some necessary background material, as well as a list of other new space-time covariance functions that can be used on spheres cross time.

\section{Background}

\subsection{Spatial Fields on Spheres, Coordinates, and Metrics}
We consider the unit sphere $\S^2$, defined as $\S^2= \{\s \in \R^3, \| \s \|=1 \}$, where $\|\cdot\|$ denotes Euclidean distance. Every point $\s$ on the sphere $\S^2$ has spherical coordinates $\s=(\phi, \vartheta)$, with $\phi \in [0,\pi]$ and $\vartheta \in [0,2\pi)$ being respectively the polar and the azimuthal angles (equivalently, latitude and longitude). The extension to the sphere with arbitrary radius is straightforward. For planet Earth, the radius is approximately $6,371$\,km, albeit the Earth is not exactly a sphere. 

The natural distance on the sphere is the geodesic or great circle distance, defined as the mapping $d_{{\rm GC}} : \S^2 \times \S^2 \to [0,\pi]$ so that $$d_{{\rm GC}}(\s_1,\s_2)= \arccos \big (\langle \s_1,\s_2 \rangle \big )= \arccos \big ( \sin\phi_1 \sin\phi_2 +\cos \phi_1\cos\phi_2\cos \vartheta \big ),$$ with $\s_i=(\phi_i,\vartheta_i)$, $i=1,2$, and $\langle \cdot,\cdot \rangle$ denoting the classical dot product on the sphere, and where $\vartheta= |\vartheta_1-\vartheta_2|$. Thus, the geodesic distance describes an arc between any pair of points located on the spherical shell.  Throughout, we shall equivalently use $d_{{\rm GC}}(\s_1,\s_2)$ or its shortcut $d_{{\rm GC}}$ to denote the geodesic distance, whenever no confusion can arise.

An approximation of the true distance between any two points on the sphere is the chordal distance $d_{{\rm CH}}(\s_1,\s_2)$, given by 
$$ d_{{\rm CH}}(\s_1,\s_2) = 2 \sin \left ( \frac{d_{{\rm GC}}(\s_1,\s_2)}{2 } \right ), \qquad \s_1,\s_2 \in \S^2,  $$
which defines a segment below the arc joining two points on the spherical shell. 

We consider zero mean Gaussian fields $\{ X(\s), \s \in \S^2 \}$ with finite second order moment. Thus, the finite dimensional distributions are completely specified by the covariance function 
$C_{\bS}: \S^2 \times \S^2 \to \R$, defined by 
$$ C_{\bS}(\s_1,\s_2)= \mathbb{C}{\rm ov} \big ( X(\s_1),X(\s_2) \big ), \qquad \s_1,\s_2 \in \S^2. $$
Covariance functions are positive definite: for any $N$ dimensional collection of points $\{\s_i \}_{i=1}^N \subset \S^2 $ and constants $c_1, \ldots, c_N \in \R$, we have
\begin{equation}
\label{pos-def}  \sum_{i=1}^N\sum_{j=1}^N c_i C_{\bS} \left (\s_i,\s_j \right ) c_j \ge 0,  
\end{equation}
see \cite{bingham}. We also define the variogram $\gamma_{\bS}$ of $Z$ on $\S^2$ as half the variance of the increments of $X$ at the given points on the sphere. Namely,
$$ 2 \gamma_{\bS}(\s_1,\s_2) = \mathbb{V}{\rm ar} \big ( X(\s_2) - X(\s_1)\big ), \qquad \s_1,\s_2   \in \S^2. $$ For a discussion about variograms on spheres, the reader is referred to \cite{huang} and \cite{gneiting2}, and the references therein. 

The simplest process in the Euclidean framework is the isotropic process, i.e., a process that does not depend on a particular direction, but just on the distance between points. We say that $C_{\bS}$ is geodesically isotropic if  $C_{\bS}(\s_1,\s_2)=\psi_{\bS}(d_{{\rm GC}}(\s_1,\s_2))$, for some $\psi_{\bS}:[0,\pi] \to \R$. $\psi_{\bS}$ is called the geodesically isotropic part of $C_{\bS}$ \citep{daley-porcu}. Henceforth, we shall refer to both $C_{\bS}$ and $\psi_{\bS}$ as covariance functions, in order to simplify exposition. For a characterization of geodesic isotropy, the reader is referred to \cite{schoenberg} and the essay in \cite{gneiting2}. The definition of a geodesically isotropic variogram is analogous. 

While a geodesically isotropic process on $\mathbb{S}^2$ is the natural counterpart to an isotropic process in Euclidean space, it is not necessarily an appropriate process for globally-referenced data. Although it may be argued that on a sufficiently small scale atmospheric phenomena lack any structured flow, this does not apply in general for synoptic or mesoscale processes such as prevailing winds, which follow regular patterns dictated by atmospheric circulation. 

As a first approximation for large scale atmospheric phenomena, a process may be assumed to be nonstationary for different latitudes, but stationary for the same latitude. Indeed, is it expected that physical quantities such as temperature at surface will display an interannual variability that is lower in the tropics than at mid-latitude. Therefore, we define the covariance $C_{\bS}$ to be \textit{axially symmetric} if  
\begin{equation} \label{def-axial-symmetry}
 C_{\bS}\left ( \s_1\s_2 \right )  = {\cal C}_{\bS}(\phi_1,\phi_2, \vartheta_1-\vartheta_2), \qquad (\phi_i,\vartheta_i) \in [0,\pi] \times [0,2 \pi) , i=1,2. \end{equation} 
Additionally, an axially symmetric Gaussian field $X(\s)$ is called longitudinally reversible if 
\begin{equation} \label{def-long-rev}
 {\cal C}_{\bS} \big (\phi_{1},\phi_{2},\vartheta \big ) = {\cal C}_{\bS} \big (\phi_1,\phi_2,-\vartheta \big ), \qquad \phi_i \in [0,\pi], \vartheta \in [-2\pi,2\pi), i=1,2. 
 \end{equation}
An alternative notion of isotropy can be introduced if we assume that the sphere $\S^2$ is embedded in the three dimensional Euclidean space $\mathbb{R}^3$, and that the Gaussian process $X$ is defined on $\R^3$. The covariance $C_{\bS}$ can then be defined by restricting $X$ on $\S^2$, which gives rise to the name of Euclidean isotropy or radial symmetry, since it depends exclusively on the chordal distance ($d_{{\rm CH}}$) between the points.  Following \cite{Yadrenko} and \cite{Yaglom2}, for any $C_{\bS}$ being isotropic in the Euclidean sense in $\R^{3}$, the function $C_{\bS}(d_{{\rm CH}})$ is a valid covariance function on $\S^{2}$. This principle has been used to create space-time and multivariate covariance functions, and we come back into this with details in Section \ref{cordal}.

\subsection{Space-Time Random Fields and Covariance Functions}
We now describe zero mean Gaussian fields $\{Z(\s,t), \s \in \S^2, t \in \R\}$, evolving temporally over the sphere $\S^2$. Henceforth, we assume that $Z$ has finite second-order moment.
Given the Gaussianity assumption, we focus on the covariance function $C: \left ( \S^2 \times \R \right )^2 \to \R $,  defined as 
\begin{equation}
\label{cov-def} C\big ( (\s_1,t_1),(\s_2,t_2) \big )= \mathbb{C}{\rm ov} \big ( Z(\s_1,t_1),Z(\s_2,t_2)\big ), \qquad (\s_i,t_i) \in \S^2 \times \R, i=1,2. 
\end{equation}
The definition of positive definiteness follows from Equation (\ref{pos-def}). \\
The covariance $C$ is called separable \citep{GGG07}, if there exists two mappings $C_{\bS} : (\S^2 )^2 \to \R$ and $C_{{\cal T}}: \R^2 \times \R$ such that 
\begin{equation}
\label{separability} 
 C\big ( (\s_1,t_1),(\s_2,t_2) \big ) = C_{\bS}(\s_1,\s_2) C_{{\cal T}}(t_1,t_2), \qquad (\s_i,t_i) \in \S^2 \times \R, \quad i=1,2. 
\end{equation}
Separability can be desirable from a computational standpoint: for a given set of colocated observations (that is, when for every instant $t$ the same spatial sites have available observations), the related covariance matrix factorizes into the Kronecker product of two smaller covariance matrices. However, it has been deemed physically unrealistic, as the degree of spatial correlation is the same at points near or far from the origin in time \citep{GGG07}; a constructive criticism is offered in \cite{stein-jasa}. There is a rich literature on non-separable covariance functions defined on Euclidean spaces; see the reviews  in \cite{GGG07, gregori-review} and \cite{bcg},  with the references therein.  \\

\subsection{Temporally Evolving Geodesic Isotropies and Axial Symmetries}

A common assumption for space-time covariance functions over spheres cross time is that of geodesic isotropy coupled with temporal symmetry: there exists  a mapping $\psi:[0,\pi] \times \R \to \R$ such that 
\begin{equation} \label{isotropy}
C\big ( (\s_1,t_1),(\s_2,t_2) \big )  = \psi \big (d_{{\rm GC}}(\s_1,\s_2),t_1-t_2 \big ), \qquad (\s_i,t_i) \in \S^2 \times \R,  \; i=1,2.
\end{equation} 
Following \cite{berg-porcu}, we call ${\cal P}(\S^2,\R)$ the class of continuous functions $\psi$ such that $\psi(0,0) =\sigma^2 < \infty$ and the identity (\ref{isotropy}) holds. The functions $\psi $ are called the geodesically isotropic and temporally symmetric parts of the space-time covariance functions $C$.  Also, we refer equivalently to $C$ or $\psi$ as covariance functions, in order to simplify the exposition. In Appendix A we introduce the more general class ${\cal P}(\S^n,\R)$ and show many relevant facts about it. 

Equation (\ref{isotropy}) can be generalized to the case of temporal nonstationarity and the reader is referred to \cite{estrade} for a mathematical approach to this problem.

As in the purely spatial case, isotropic models are seldom used in practical applications as they are deemed overly simplistic. Yet, they can served as building blocks for more sophisticated models, that can account for local anisotropies and nonstationarities. 
%
%
%

We now couple spatial axial symmety  with temporal stationarity, so that, for the covariance $C$ in (\ref{cov-def}), there exists a continuous mapping ${\cal C}: [0,\pi]^2 \times [-2 \pi,2 \pi) \times \R$ such that
$$ C\big ( (\s_1,t_1),(\s_2,t_2) \big )  = {\cal C}(\phi_1,\phi_2, \vartheta_1-\vartheta_2, t_1-t_2), \quad\, (\phi_i,\vartheta_i,t_i) \in [0,\pi] \times [0,2 \pi) \times \R, \; i=1,2.$$
Additionally, we call a temporally stationary--spatially axially symmetric random field $Z(\s,t)$, longitudinally reversible, if 
\begin{equation} \label{lingitudinal_reversible}
 {\cal C}(\phi_{1},\phi_{2},\vartheta, u) = {\cal C}(\phi_1,\phi_2,-\vartheta, u), \qquad \phi_i \in [0,\pi], \vartheta \in [-2\pi,2\pi), u \in \R,\; i=1,2. 
 \end{equation}
The use of statistical models based on axially symmetric and longitudinal reversible stochastic processes on the sphere is advocated in \cite{Stein} for the analysis of total column ozone. We show throughout the paper that the construction in Equation (\ref{lingitudinal_reversible}) is especially important for implementing dynamical models as described in Section \ref{dynamical}, where a solely spatial version ${\cal C}_{\bS}(\phi_1,\phi_2,\vartheta)$ is used. Note that a longitudinally reversible covariance function ${\cal C}$ might be separable, in which case 
\begin{equation}
{\cal C}(\phi_1,\phi_2, \vartheta,u)= {\cal C}_{\bS}(\phi_1,\phi_2,\vartheta)\, {\cal C}_{{\cal T}}(u), \qquad (\phi_i,\vartheta,u)\in [0,\pi] \times [-2\pi,2\pi) \times \R,\; i=1,2, 
\end{equation}
with ${\cal C}_{\bS}$ and ${\cal C}_{{\cal T}}$ being marginal covariances in their respective spaces.

\section{Second Order Approaches} \label{construction}
In this section we provide a list of techniques that are used in the literature to implement space-time covariance functions on the two-dimensional sphere cross time.

\subsection{Spectral Representations and Related Stochastic Expansions}
Spectral representations in Euclidean spaces have been known since the work of \cite{schoenberg2} and extended to space-time in \cite{cressie-huang} and then in \cite{gneiting1}. The analogue of spectral representations over spheres was then provided by \cite{schoenberg}. The case of the sphere cross time has been unknown until the recent work of \cite{berg-porcu}: finding a spectral representation for geodesically isotropic space-time covariance function is equivalent to providing a characterization of the class ${\cal P}(\S^2 ,\R)$. Namely, \cite{berg-porcu} establish that a continuous mapping $\psi$ is a member of the class ${\cal P}(\S^2 ,\R)$ if and only if 
\begin{equation} \label{representation-d-schoenberg}
\psi(d_{{\rm GC}},u) = \sum_{k=0}^{\infty} C_{k,{\cal T}}(u) P_{k}(\cos d_{{\rm GC}}), \qquad  (d_{{\rm GC}},u) \in [0,\pi] \times \R,
\end{equation}
where $\{ C_{k,{\cal T}}(\cdot) \}_{k=0}^{\infty}$ is a sequence of temporal covariance functions with the additional requirement that $\sum_{k=0}^{\infty} C_{k,{\cal T}}(0)< \infty$ in order to guarantee the variance $\sigma^2=\psi(0,0)$ to be finite. Here, $P_{k}(x)$ denotes the $k$th Legendre polynomial, $x \in [-1,1]$ \citep[see][for more details]{dai-xu}. We refer to (\ref{representation-d-schoenberg}) as spectral representation of the space time covariance $\psi(d_{{\rm GC}},u) $.

\cite{berg-porcu} showed a general representation for the case of the $n$-dimensional sphere $\S^n$ cross time (see Appendix A for details). This fact is not merely a mathematical artifact, but also the key for modeling strategies, as shown in \cite{Porcu-Bevilacqua-Genton}. 

Some comments are in order.  Clearly, $\psi_{\bS}(d_{{\rm GC}})= \psi(d_{{\rm GC}},0)$ is the geodesically isotropic part of a spatial covariance defined over the  sphere, a characterization of which can be found in the notable work by \cite{schoenberg}; see also the recent review by \cite{gneiting1}. Furthermore, representation (\ref{isotropy}) implies that $\psi$ is separable if and only if the elements $C_{k,{\cal T}}$ of the sequence $\{ C_{k,{\cal T}}(\cdot) \}_{k=0}^{\infty}$ in Equation (\ref{representation-d-schoenberg}) are of the form
$$ C_{k,{\cal T}}(u)= b_{k} \; \rho_{{\cal T}}(u), \qquad u \in \R, \quad k \in \{0,1,\ldots \}, $$ 
with $\{ b_{k}\}_{k=0}^{\infty}$ being a uniquely determined probability sequence, and $\rho_{{\cal T}}$ a temporal covariance function. 
We follow \cite{daley-porcu} and call $\{b_{k} \}_{k=0}^{\infty}$ a $2$-Schoenberg sequence to emphasize that the coefficients $b_k$ depends on the dimension of the two-dimensional sphere where $\psi$ is defined.  

It can be proved that the covariance functions from the series expansion in (\ref{representation-d-schoenberg}) have a corresponding Gaussian process with an associated series expansion as well. Indeed, direct inspection together with the addition theorem for spherical harmonics \citep{marinucci-peccati} shows that (\ref{representation-d-schoenberg}) corresponds to a Gaussian process on $\S^2 \times \R$, defined by the stochastic expansion of \cite{jones}:
\begin{equation}
\label{stochastic}  Z(\s,t)= \sum_{k=0}^{\infty} \sum_{\ell= -k}^{k} A_{k,\ell}(t) {\cal Y}_{k,\ell} (\s), \qquad (\s,t) \in \S^2 \times \R, 
\end{equation}
where ${\cal Y}_{k,\ell}$ are the deterministic spherical harmonics on $\S^2$ \citep{dai-xu}, 
and each of the zero mean Gaussian processes $\{A_{k,\ell}(t) \}_{k=0,\ell=-k}^{\infty,~~k}$ satisfies 
\begin{equation} \label{assumption}
\mathbb{E}\big (A_{k,\ell}(t)A_{k',\ell'}(t') \big )= \delta_{k=k'}\delta_{\ell=\ell'} C_{k,{\cal T}}(t-t'), \qquad t,t' \in \R,
\end{equation} with $\delta$ denoting the classical Kronecker delta, and $\{C_{k,{\cal T}}\}_{k=0}^{\infty}$ a sequence of temporal covariances summable at zero. \\  Unfortunately, the representation does not allow for many interesting examples in closed forms. Hence, using the results in \cite{berg-porcu}, \cite{Porcu-Bevilacqua-Genton} proposed a spectral representation being valid on any $n$-dimensional sphere cross $\R$, namely
\begin{equation}
\label{expansion-infinity} 
 \psi(d_{{\rm GC}},u)= \sum_{k=0}^{\infty} C_{{\cal T}}(u)^k \cos (d_{{\rm GC}})^k, \qquad (d_{{\rm GC}},u) \in [0,\pi] \times \R,
\end{equation}
 where $C_{{\cal T}}:\R \to (0,1]$ is a temporal correlation function. Some examples are reported from \cite{Porcu-Bevilacqua-Genton} in Table~\ref{table1}. Details are explained in Appendix A.

\cite{clarke} take advantage of the stochastic representation (\ref{stochastic}) to study the regularity properties of a Gaussian process $Z$ on $\S^2 \times \R$, in terms of dynamical fractal dimensions and H\"{o}lder exponents. This is achieved by taking a double Karhunen--Lo\`{e}ve expansion, e.g. by expanding each term $A_{k,\ell}(t)$ in terms of basis functions (in particular, Hermite polynomials). Such a double Karhunen--Lo\`{e}ve representation is also the key for fast simulation. \cite{clarke} propose to truncate the order of the double expansions and evaluate the error bound in the L$_{2}$ sense. Figure~\ref{spectral_realization} depicts an example with this method, using $17,000$ points on the sphere and two time instants.

\begin{figure}
\centering
\vspace*{-4mm}
\includegraphics[scale=0.45]{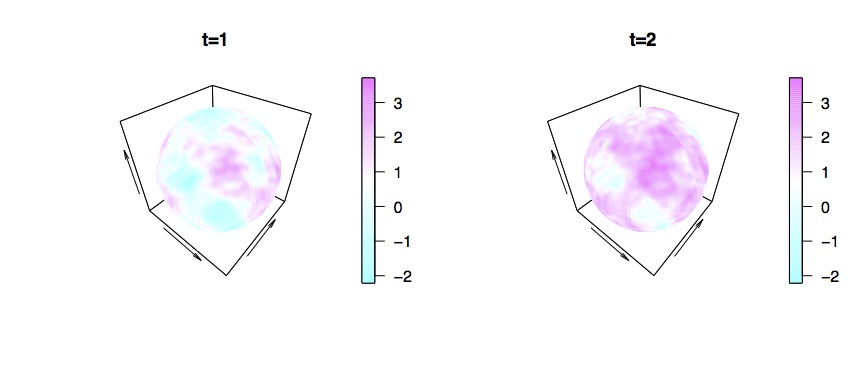}
\vspace*{-14mm}
\caption{Simulated space-time data from the double Karhunen--Lo\`{e}ve expansion used by \cite{clarke}, using 17,000 spatial sites on  $\mathbb{S}^2$ and two temporal instants. The order of trunction is $50$ for both expansions.\label{spectral_realization}}
\end{figure}

Note how in each of the parametric families outlined in Table \ref{table1}, the spatial margin $\psi_{\bS}(d_{{\rm GC}})=\psi(d_{{\rm GC}},0)$ is an analytic function. In particular, we have that the spatial margin is either non differentiable or infinitely differentiable at the origin.  

Spectral representations for stochastic processes over spheres, with the additional feature of axial symmetry, have been proposed in \cite{jones}, \cite{narkovich}, \cite{hitz} and \cite{cas13}. We are not aware of extensions of this representation to space-time, but the work in \cite{jones} suggests that axial symmetry over the sphere coupled with temporal symmetry should be obtained by relaxing the assumption (\ref{assumption}) for the processes $A_{k,\ell}$ in the expansion (\ref{stochastic}). 

An alternative approach to model axially symmetric processes over spheres cross time is proposed in \cite{cas17} on the basis of the stochastic representations in \cite{jones}. 

\begin{table}[!h]  
\caption{Parametric families of covariance functions on $\S^2 \times \R$ obtained through the representation in Equation (\ref{expansion-infinity}). Second column reports the analytic expression, where $g$ is any correlation function on the real line. An additional condition is required for the Sine Power family  \citep[refer to][for details]{Porcu-Bevilacqua-Genton}. All of the members $\psi$ in the second column are rescaled so that $\psi(0,0)=1$. We use the abuse of notation $r$ for the great circle distance $d_{{\rm GC}}$.\label{table1} }
\centering
\medskip
\tabcolsep14pt \linespread{1.4}\selectfont\begin{tabular}{|lll|}
\hline
{ Family} & {Analytic expression} & {Parameters range} \\
\hline
 Negative Binomial & $\psi(r,u)= \Big ( \frac{1-\varepsilon}{  1- \varepsilon g(u) \cos(r)  } \Big ){}^{\tau}$ & $\varepsilon \in (0,1)$, $\tau >0$ \\ 
 Multiquadric & $\psi(r,u)=\Big ( \frac{(1-\varepsilon)^{2}}{  1+ \varepsilon^2- 2\varepsilon g(u) \cos(r) }\Big){}^{\tau}$ & $\varepsilon \in (0,1)$, $\tau >0$ \\ 
 Sine Series & $\psi(r,u)=  \frac{1}{2}{\rm e}^{ g(u)\cos(r)-1} \big ( 1 +  g(u) \cos(r) \big )$  & \\
Sine Power & $\psi(r,u)=1-2^{-\alpha} \Big (1-g(u) \cos(r) \Big ){}^{\alpha/2}$ & $\alpha \in (0,2]$ \\
 Adapted Multiquadric & $\psi(r,u)= \Big ( \frac{(1+g^2(u))(1-\varepsilon )}{ 1+g^2(u) -2 \varepsilon g(u) \cos(r) } \Big){}^{\tau}$ & $\varepsilon \in (0,1), \tau >0$ \\
Poisson & $\psi(r,u)= \exp \big ( \lambda ( \cos(r) g(u) -1 ) \big )$ & $\lambda >0$ \\
\hline
\end{tabular} 
\end{table}

\subsection{Scale Mixture Representations}

Scale mixtures provide a powerful and elegant way to build members of the class ${\cal P}(\S^2,\R)$. We sketch the general principle here and then report some examples. Let $F$ be a positive measure on the positive real line. Let $\psi_{\bS}(d_{{\rm GC}};\xi)$ be a geodesically isotropic spatial covariance for any $\xi \in \R_+$. Also, let $C_{{\cal T}}(u;\xi)$ be a covariance for any $\xi$.  
Then, arguments in \cite{pz:11} and \cite{GGG07} show that the function 
$$ \psi(d_{{\rm GC}},u) = \int_{\R_+} \psi_{\bS}(d_{{\rm GC}};\xi) C_{{\cal T}}(u;\xi) F({\rm d} \xi), \qquad (d_{{\rm GC}},u) \in [0,\pi] \times \R, $$ is a nonseparable geodesically isotropic and temporally symmetric space-time covariance function. 

The scale mixture approach offers a nice interpretation in terms of the construction of the associated process. Let $\Xi$ be a random variable with probability distribution $F$ having a finite first moment. Let $X$ and $Y$ be, respectively, a purely spatial and  a purely temporal Gaussian process. Also, suppose that the random variable ${\Xi}$ and the two processes are mutually independent. Let
$$Z(\s,t \mid \Xi= \xi)= X (\s \xi)Y(t \xi), \qquad (\s,t) \in \S^2 \times \R, $$
where $\xi$ is a realization from ${\Xi}$ and let $Z(\s,t) = \mathbb{E}Z(\s,t \mid \Xi)$, with expectation $\mathbb{E}$ taken with respect to $F$. Then, the covariance of $Z$ admits a scale mixture representation. 
\medskip

In general, it is not possible adapt the classes of nonseparable covariance functions based on scale mixtures and defined over $\R^3 \times \R$ to the case $\S^2 \times \R$. For instance, Gaussian scale mixtures as proposed in \cite{schlather} cannot be implemented on the sphere because the function $C_{\bS}(d_{{\rm GC}};\xi)= \exp(-(d_{{\rm GC}}/\xi)^{\alpha})$ is positive definite on the sphere only when $\alpha \in (0,1]$ \citep[see][]{gneiting2}. Some caution needs be taken when considering the scale mixture based covariances defined on Euclidean spaces by  \cite{porcu-mateu-bevilacqua, porcu-mateu} and \cite{fonseca}, amongst others.

To illustrate some valid examples, further notation is needed. A function $f:[0,\infty) \to (0,\infty)$ is called completely monotonic if it is infinitely differentiable on the positive real line and if its $j$th order derivatives $f^{(j)}$ satisfy $(-1)^j f^{(j)}(t) \ge 0$, $t>0$, $j=0,1,2,\ldots$. 
A scale mixture argument in \cite{Porcu-Bevilacqua-Genton}  shows that the function 
\begin{equation} \label{PBG} \psi(d_{{\rm GC}},u) = \frac{\sigma^2}{\gamma_{{\cal T}}(u)^3} f \big ( d_{{\rm GC}} \gamma_{{\cal T}}(u) \big ), \qquad (d_{{\rm GC}},u) \in [0,\pi] \times \R, \end{equation} is a nonseparable covariance function on $\S^2 \times \R$ provided $f$ is completely monotonic, with $f(0)=1$, and $\gamma_{{\cal T}}$ a variogram, with $\gamma_{\cal T}(0)=1$. Here, $\sigma^2$ denotes the variance. We consider a special case of Equation (\ref{PBG}), namely
\begin{equation}
\label{ex1}
\psi(d_{{\rm GC}},u) = \frac{\sigma^2 }{\left( 1+\frac{|u|}{b_T} \right)^3} \exp\left ( - \frac{ d_{{\rm GC}} \left( 1+\frac{|u|}{b_T} \right) }{ b_S} \right ) ,
\end{equation}
which corresponds to $f(t)=\exp(-t)$, $t \ge 0$, with $\gamma_{\cal T}(u)=(1+|u|)$, $u \in \R$. Here, $\sigma^2$, $b_S$  and $b_T$ are positive parameters associated with the variance, spatial scale and temporal scale of the field, respectively.  Figure \ref{gneiting_realization} shows  a realization using $\sigma^2=1$, $b_S=0.3$ and $b_T = 0.5$.

\begin{figure} 
\centering
\vspace*{-4mm}
\includegraphics[scale=0.45]{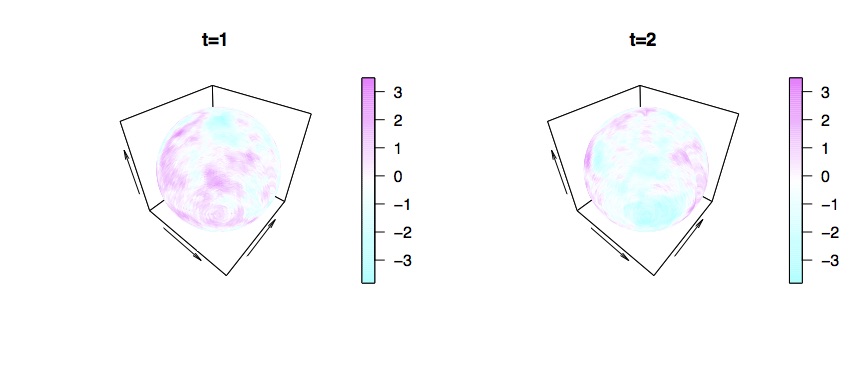}
\vspace*{-14mm}
\caption{Simulated space-time data from the covariance (\ref{ex1}), over 17,000 spatial sites on  $\mathbb{S}^2$ and 2 temporal instants. \label{gneiting_realization}}
\end{figure}

Another scale mixture approach allows for an adaptation of the Gneiting class \citep{gneiting1, zp:11}. Again, \cite{Porcu-Bevilacqua-Genton} show how to derive a space-time covariance of the type 
\begin{equation}
\label{PBG2}  \psi(d_{{\rm GC}},u)= \frac{\sigma^2}{g_{[0,\pi]}(d_{{\rm GC}})^{1/2}} f \left ( \frac{u}{g_{[0,\pi]}(d_{{\rm GC}})}\right ), \qquad (d_{{\rm GC}},u) \in [0,\pi] \times \R, 
\end{equation}
where $f$ is completely monotonic, with $f(0)=1$, and $g_{[0,\pi]}$ is the restriction to the interval $[0,\pi]$ of a function, $g:[0,\infty) \to \R_+$, having a completely monotonic derivative \citep[see][for a description with classes of functions having this property]{porcu-schilling}.  

We are now going to outline a new result within the scale-mixture based approach. Specifically, we consider quasi arithmetic means, as defined in \cite{porcu-mateu-christakos}: these allow to obtain space-time covariances with given margins in space and time. The construction is defined as
\begin{equation}
\label{qarf} 
\psi(d_{{\rm GC}},u) = {\cal Q}_{f} \left ( \psi_{\bS}(d_{{\rm GC}}), C_{{\cal T}}(u) \right ), \qquad (d_{{\rm GC}},u) \in \S^2 \times \R, 
\end{equation}
where ${\cal Q}_{f} (x,y) = f \left ( 1/2 f^{-1}(x) + 1/2 f^{-1}(y) \right )$, for a function $f$ that is completely monotonic on the positive real line, and having a proper inverse, $f^{-1}$, which is always well defined because completely monotonic functions are strictly decreasing. There is a wide list of such functions available and the reader is referred to \cite{SSV} and to \cite{porcu-schilling}. Quasi arithmetic means include as special case all the other means (e.g., the geometric, harmonic, and the Gini means), and the reader is referred to \cite{porcu-mateu-christakos} for a complete description of this framework. 

Using similar arguments as in \cite{porcu-mateu-christakos}, Theorem \ref{THM:QARF} in Appendix C shows the conditions for which $\psi$ in Equation (\ref{qarf}) is a geodesically isotropic space-time covariance function.

\subsection{Space-Time Compact Supports} \label{compact}

Let $\psi$ be a member of the class ${\cal P}(\S^2,\R)$. We say that $\psi$ is dynamically compactly supported on the sphere if there exists a function $h:\R \to (0,c]$, with $c \le \pi$, such that for every fixed temporal lag $u_0$, the function $\psi_{\bS}(d_{{\rm GC}};u_0)= \psi(d_{{\rm GC}},u_0)$ is a covariance function on $\S^2$, and is compactly supported on the interval $[0,h(u_0))$. \\
Let $(x)_+$ denote the positive part of a real number $x$. For $\mu>0$ and $k=0,1,2,\ldots$, we define Wendland functions $\varphi_{\mu,k}$ \citep{Wendland} by 
\begin{equation} \label{wendland}
\varphi_{\mu,k} (x) = \left ( 1 - x \right )_{+}^{\mu+k} {\mathbb P}_k \left ( x \right ), \qquad x \ge 0,
\end{equation}
where ${\mathbb P}_k$ is a polynomial of order $k$. Special cases are depicted in the second column of 
Table~\ref{wendland2}.

\begin{table}[!h] 
\caption{\label{wendland2}Wendland correlation  $\varphi_{\mu,k}(\cdot)$ and Mat{\'e}rn correlation  ${\cal M}_{\nu}(\cdot)$  with increasing smoothness parameters $k$ and $\nu$.
$SP(k)$ means that the sample paths of the  associated  Gaussian field
are  $k$ times differentiable.} 
\bigskip
\centering
{\tabcolsep8pt \linespread{1.4}\selectfont\begin{tabular}{|clclc|}
\hline
~$k$~& $\varphi_{\mu,k}(r)$ &  $\nu$&${\cal M}_{\nu}(r)$ &  $SP(k)$
\raisebox{0pt}[18pt][-7pt]{} \\
\hline
$0$ & $(1-r)^{\mu}_{+}$ & $0.5$&   $e^{-r}$ & 0\\
$1$ & $(1-r)^{\mu+1}_{+}(1+r(\mu+1))$ & $1.5$ & $ e^{-r}(1+r)$ &1 \\
$2$ & $(1-r)^{\mu+2}_{+}(1+r(\mu+2)+r^2(\mu^2+4\mu+3)\frac{1}{3})$ &$2.5$ &$ e^{-r}(1+r+\frac{r^{2}}{3})$&2\\
\multirow{2}{*}{$3$} & $(1-r)^{\mu+3}_{+}
\big( 1+r(\mu+3)+r^2(2\mu^2+12\mu+15)\frac{1}{5}\qquad $
&\multirow{2}{*}{$3.5$}&\multirow{2}{*}{$ e^{-r}(1+\frac{r}{2}+r^2 \frac{6}{15}+\frac{r^3}{15})$}&\multirow{2}{*}{$3$}\\
& $\hspace*{2.8cm} +r^3(\mu^3+9\mu^2+23\mu+15)\frac{1}{15}\big)\,$ &&&\\
\hline
\end{tabular} }
\end{table}

We provide new results in the remainder of this subsection.
Using scale mixture arguments, we can show that 
\begin{equation} \label{wen-space-time}
\psi(d_{{\rm GC}},u)= \frac{\sigma^2}{c^{\alpha}} h(|u|)^{\alpha} \varphi_{\mu,0} \left ( \frac{d_{{\rm GC}}}{ h(|u|)} \right ) = \frac{\sigma^2}{c^{\alpha}} h(|u|)^{\alpha} \left (1- \frac{d_{{\rm GC}}}{ h(|u|)} \right )_+^{\mu} ,
\end{equation}
$(d_{{\rm GC}},u) \in [0,\pi] \times \R$, where, for $\alpha \ge 3$ and $\mu \ge 4$, is a covariance function on $\S^2 \times \R$ provided $h$ is positive, decreasing and convex on the positive real line, with $h(0)=c$, $0<c\le \pi$, and $\lim_{t \to \infty} h(t)=0$. The additional technical restriction on $\mu$ and $\alpha$ is explained in Theorem \ref{THM:ASKEY} in Appendix C, where a formal proof is given. An example can better clarifly things. A potential candidate that can be used as dynamical support in (\ref{wen-space-time}) is the function $h(t)= c(1+t)^{-1/\alpha}$, $t \ge 0$, $\alpha \ge 3$ and $0<c\le \pi$. Then, previous construction becomes 
\begin{equation} \label{wen-space-time2}
\psi(d_{{\rm GC}},u)= \frac{\sigma^2}{(1+|u|)}  \left (1-  \frac{d_{{\rm GC}}}{c(1+|u|)^{-1/\alpha}} \right )_+^{\mu} , \qquad (d_{{\rm GC}},u) \in [0,\pi] \times \R, 
\end{equation}
which shows that the spatial margin $\psi(d_{{\rm GC}},0)$ has compact support $c$. If $c=\pi$, then $\psi(d_{{\rm GC}},0)$ becomes globally supported. Note how the compact support becomes smaller as the temporal lag increases, which is a very intuitive property. \\

The following result characterizes a class of dynamically supported Wendland functions on the sphere cross time, being a generalization of (\ref{wen-space-time}) to arbitrary $k \in \mathbb{N}$.  
\begin{thm} \label{gen_w} 
Let $0<c\le \pi$. Let $h:[0,\infty) \to (0,c]$, be positive, decreasing and convex on the positive real line, with $\lim_{t \to \infty} h(t)=0$. Let $k$ be a positive integer and $\alpha \ge 2k+2$. Let $\varphi_{\mu,k}$ be the Wendland function defined in Equation (\ref{wendland}). Then,
\begin{equation} \label{wen-space-time3}
\psi(d_{{\rm GC}},u)= \sigma^2 h(|u|)^{\alpha} \varphi_{\mu,k} \left ( \frac{d_{{\rm GC}}}{ h(|u|)} \right ), \qquad (d_{{\rm GC}},u) \in [0,\pi] \times \R, 
\end{equation}
is a covariance function on the sphere cross time, provided $\mu \ge k+4$. 
\end{thm}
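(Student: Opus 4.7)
The plan is to prove Theorem \ref{gen_w} by induction on the smoothness order $k$, with the base case $k=0$ handled by (\ref{wen-space-time}) and the inductive step built on Wendland's montée recursion for $\varphi_{\mu,k}$.

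For the base case, (\ref{wen-space-time}) asserts that the rescaled Askey function $h(|u|)^{\alpha}(1-d_{GC}/h(|u|))_+^{\mu}$ is a covariance on $\S^2\times\R$ for $\alpha\ge 3$, $\mu\ge 4$. The proof rests on two ingredients: first, Theorem \ref{THM:ASKEY} of Appendix C, which guarantees spatial positive-definiteness of the Askey function $(1-d_{GC}/c)_+^{\mu}$ on $\S^2$ for any $c\in(0,\pi]$ and $\mu\ge 4$; and second, the integral representation of $h$ inherited from its convexity,
\begin{equation*}
h(|u|) \;=\; \int_0^\infty (v-|u|)_+\,\nu(dv),
\end{equation*}
valid for a positive measure $\nu$ (the distributional second derivative of $h$, well-defined because $h$ is convex), which expresses $h(|u|)$ as a positive mixture of triangular covariances on $\R$. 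Combining these yields joint positive-definiteness through scale-mixture integration, exactly as in the argument underpinning (\ref{wen-space-time}).

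For the inductive step, assuming the theorem for $k-1$, I would invoke the recursive identity $\varphi_{\mu,k}(r)=\tilde{c}_{\mu,k}\int_r^{\infty} t\,\varphi_{\mu,k-1}(t)\,dt$ with a positive constant $\tilde{c}_{\mu,k}>0$, which is implicit in the iterated montée construction of the Wendland family and verifiable against the explicit formulas in Table \ref{wendland2}. Applying the substitution $t=s/h(|u|)$ produces
\begin{equation*}
h(|u|)^\alpha\,\varphi_{\mu,k}\!\left(\tfrac{d_{GC}}{h(|u|)}\right)
\;=\; \tilde{c}_{\mu,k}\int_{d_{GC}}^{\infty} s\cdot h(|u|)^{\alpha-2}\,\varphi_{\mu,k-1}\!\left(\tfrac{s}{h(|u|)}\right)\,ds,
\end{equation*}
the Jacobian supplying the requisite shift $\alpha\mapsto\alpha-2$. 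The parameter constraints needed to invoke the inductive hypothesis become $\alpha-2\ge 2(k-1)+2$ and $\mu\ge(k-1)+4$, which exactly match (and are implied by) the stated hypotheses $\alpha\ge 2k+2$ and $\mu\ge k+4$; the latter is also the tight threshold coming from Theorem \ref{THM:ASKEY} for $\varphi_{\mu,k}$ itself to be spatially positive definite on $\S^2$.

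The main obstacle is justifying that the right-hand side above, with its $d_{GC}$-dependent lower limit of integration, is genuinely positive definite on $\S^2\times\R$: the integrand $s\cdot h(|u|)^{\alpha-2}\varphi_{\mu,k-1}(s/h(|u|))$ depends on $u$ (through $h$) but not directly on $d_{GC}$, so the spatial dependence resides entirely in the cut-off $\mathbf{1}\{s>d_{GC}\}$, which is \emph{not} positive definite on $\S^2$ when viewed in isolation. To circumvent this, I would not treat the integral as a naive positive mixture over $s$, but instead recognize it as a rescaled compactly supported covariance at the scale $\tau=h(|u|)$: since $\varphi_{\mu,k-1}(\cdot/\tau)$ is positive definite on $\S^2$ for every $\tau\in(0,\pi]$ (Theorem \ref{THM:ASKEY}), combining with the Bernstein-type representation of $h^{\alpha-2}$ (extended from the convexity of $h$ via products and Polya-type arguments) yields a genuine joint scale mixture in which the spatial and temporal factors decouple in each layer. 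The parameter bounds $\alpha\ge 2k+2$ and $\mu\ge k+4$ are precisely what ensures absolute integrability of this mixture and spatial positive-definiteness at each layer, closing the induction.
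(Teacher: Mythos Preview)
Your inductive scheme has a genuine gap at precisely the point you flag yourself. After the mont\'ee substitution you obtain
\[
h(|u|)^{\alpha}\,\varphi_{\mu,k}\!\left(\frac{d_{\mathrm{GC}}}{h(|u|)}\right)
= \tilde c_{\mu,k}\int_{0}^{\infty} \mathbf{1}\{s>d_{\mathrm{GC}}\}\, s\, h(|u|)^{\alpha-2}\,\varphi_{\mu,k-1}\!\left(\frac{s}{h(|u|)}\right)\,ds,
\]
and, as you note, the spatial dependence sits entirely in the cap indicator $\mathbf{1}\{s>d_{\mathrm{GC}}\}$, which is \emph{not} positive definite on $\S^{2}$. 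Your proposed workaround---``recognize it as a rescaled compactly supported covariance at scale $\tau=h(|u|)$''---does not repair this: knowing that $\varphi_{\mu,k-1}(\cdot/\tau)$ is positive definite on $\S^{2}$ for each \emph{fixed} $\tau$ says nothing about joint positive definiteness once $\tau=h(|u|)$ varies with $u$, unless you produce an explicit scale-mixture decomposition of the form $\int A(d_{\mathrm{GC}};\xi)\,B(u;\xi)\,d\xi$ with $A$ positive definite on $\S^{2}$ and $B$ positive definite on $\R$ for every $\xi$. The mont\'ee identity simply does not hand you such a decomposition; the Euclidean mont\'ee is a dimension-walk operator on $\R^{d}$, and its spherical analogue involves $\sin t\,dt$ rather than $t\,dt$, so the recursion you write does not transfer positive definiteness on $\S^{2}$. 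The appeal to a ``Bernstein-type representation of $h^{\alpha-2}$ via products and Polya-type arguments'' is not a concrete step: convexity of $h$ gives a mixture representation for $h$ itself, but no such representation for $h^{\alpha-2}$ with the required positivity is supplied, and in any case this would at best reproduce the $k=0$ argument rather than close the induction.

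The paper's proof avoids the sphere-side induction altogether and instead transfers the problem to $\R\times\R$. One first shows (citing the Euclidean result of \cite{Porcu-Bevilacqua-Genton-2}) that $C(x,u)=\sigma^{2}h(|u|)^{\alpha}\varphi_{\mu,k}(|x|/h(|u|))$ is positive definite on $\R\times\R$ for $\mu\ge k+4$. One then computes the partial Fourier coefficients $b_{n}(u)=\int_{\R}\cos(nx)\,C(x,u)\,dx$, which by Lemma~1 of \cite{gneiting1} are positive definite on $\R$ for every $n$; a closed form in terms of $\mathstrut_{1}F_{2}$ (from \cite{Porcu-Zastavnyi-Bevilacqua}) shows that $b_{n}(0)$ is strictly decreasing in $n$. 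Proposition~3.6 of \cite{berg-porcu} then upgrades positive definiteness on $\S^{1}\times\R$ to $\S^{3}\times\R$ (hence $\S^{2}\times\R$). The key idea you are missing is this passage through the Euclidean line followed by the Schoenberg-coefficient monotonicity step; no spherical mont\'ee or inductive scale mixture is needed.
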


A more general statement involves compactly supported space-time covariance functions on the circle, where no parametric forms are imposed on the involved functions.

\begin{thm} \label{g_wendland2}
Let $\varphi: \R \times \R \to \R$ be a covariance functions that is symmetric in both first and  second argument, such that $\varphi(x,u)= 0$ whenever $|x| \ge \pi$, for all $u\in \R$. Call $\psi$ the restriction of $\varphi$ to the interval $[0,\pi]$ with respect to the first argument. Then, $\psi$ is a geodesically isotropic and temporally symmetric covariance function on the circle $\S^1$ cross time $\R$.
\end{thm}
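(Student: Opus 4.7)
The plan is to establish positive definiteness of $\psi$ by a periodic lifting argument that transfers the positive definiteness of $\varphi$ on $\R \times \R$ down to the circle. Parameterize points on $\S^1$ by angles $\theta \in [0, 2\pi)$, so that $d_{{\rm GC}}(\theta_i, \theta_j) = \min(|\theta_i - \theta_j|, \, 2\pi - |\theta_i - \theta_j|) \in [0, \pi]$. The compact-support hypothesis on $\varphi$, combined with its symmetry in the first argument, yields the three-shift identity
\begin{equation*}
\psi\bigl(d_{{\rm GC}}(\theta_i, \theta_j),\, u\bigr) \;=\; \sum_{m \in \mathbb{Z}} \varphi(\theta_i - \theta_j + 2\pi m,\, u),
\end{equation*}
in which, for $\theta_i, \theta_j \in [0, 2\pi)$, at most one summand (with $m \in \{-1, 0, 1\}$) is nonzero. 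A short case analysis verifies this: if $\theta_i - \theta_j \in [0, \pi]$ then only $m = 0$ contributes; if $\theta_i - \theta_j \in (\pi, 2\pi)$ only $m = -1$ contributes, and the corresponding argument has absolute value $2\pi - (\theta_i - \theta_j) = d_{{\rm GC}}$, so by symmetry of $\varphi$ in the first argument the sum again equals $\psi(d_{{\rm GC}}, u)$.

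Now fix finitely many triples $(\theta_i, t_i, c_i)$, $i = 1, \ldots, N$, with $\theta_i \in [0, 2\pi)$. For each integer $M \ge 2$, lift each space-time point to $M$ periodic replicas $(\theta_i + 2\pi k, t_i)$, $k = 0, \ldots, M-1$, all carrying the same weight $c_i$. Applying positive definiteness of $\varphi$ on $\R \times \R$ to these $NM$ lifted points and grouping terms according to the shift $m = k - l$ yields
\begin{equation*}
\sum_{m = -(M-1)}^{M-1} (M - |m|)\, S_m \;\ge\; 0, \qquad S_m \;:=\; \sum_{i,j=1}^N c_i c_j \, \varphi(\theta_i - \theta_j + 2\pi m,\, t_i - t_j).
\end{equation*}
Dividing by $M$ and letting $M \to \infty$, the weights $(1 - |m|/M)$ tend to $1$, while the compact support of $\varphi$ forces $S_m = 0$ for $|m| \ge 2$ (since $\theta_i - \theta_j \in (-2\pi, 2\pi)$ implies $|\theta_i - \theta_j + 2\pi m| \ge 2\pi > \pi$ as soon as $|m| \ge 2$). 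The inequality therefore reduces in the limit to $S_{-1} + S_0 + S_1 \ge 0$, which by the three-shift identity is precisely $\sum_{i,j} c_i c_j \, \psi(d_{{\rm GC}}(\theta_i, \theta_j),\, t_i - t_j) \ge 0$.

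Temporal symmetry of $\psi$ follows immediately from the symmetry of $\varphi$ in its second argument, and geodesic isotropy is built into the construction. The main technical step is the three-shift identity together with confirming which single lift recovers the geodesic representative; the boundary case $|\theta_i - \theta_j| = \pi$ is harmless because the compact-support hypothesis forces $\varphi(\pm\pi, u) = 0$, which is consistent with $\psi(\pi, u) = 0$.
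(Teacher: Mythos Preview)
Your proof is correct and takes a genuinely different route from the paper's. The paper argues spectrally: it computes the Fourier coefficients
\[
b_n(u)=\frac{2}{\pi}\int_0^\pi \cos(nz)\,\psi(z,u)\,{\rm d}z=\frac{1}{\pi}\int_{-\infty}^{\infty}\cos(nx)\,\varphi(x,u)\,{\rm d}x,
\]
invokes Lemma~1 of \cite{gneiting1} to conclude that each $b_n$ is positive definite on $\R$, checks $\sum_n b_n(0)<\infty$ via integrability of the Fourier transform, and then appeals to the Berg--Porcu characterization (Theorem~3.3 of \cite{berg-porcu}) of ${\cal P}(\S^1,\R)$. Your argument bypasses the spectral machinery entirely: the periodization identity reduces the circle quadratic form to $S_{-1}+S_0+S_1$, and the Fej\'er-type averaging over $M$ lifted copies extracts exactly this sum in the limit. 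Your approach is more elementary and self-contained, requiring neither the Berg--Porcu representation nor Gneiting's lemma; the paper's approach, on the other hand, sits naturally within the article's spectral framework and makes explicit the Schoenberg coefficients $b_n(u)$, which are reused in the proof of Theorem~\ref{gen_w}. One minor remark: since $S_m=0$ for $|m|\ge 2$, your inequality is already $MS_0+(M-1)(S_1+S_{-1})\ge 0$ for every $M\ge 2$, so the limit step is immediate; the heavier Fej\'er mechanism you set up would only be needed if infinitely many $S_m$ survived.
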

The above result is unfortunately insufficient to build for building compactly supported models over spheres cross time. Theorem \ref{gen_w} offers a special case with dynamically supported Wendland functions. To obtain a general assertion on the basis of Theorem \ref{g_wendland2}, we give some hints in the research problems at the end of the manuscript. 

\subsection{Covariance Models under the Lagrangian framework}
Environmental, atmospheric, and geophysical processes are often influenced by prevailing winds or ocean currents \citep{GGG07}. In this type of situation, the general idea of a Lagrangian reference frame applies. 
\cite{alegria2-dimple} considered a simple Lagrangian framework that allows for transport effects over spheres. Namely, they consider random orthogonal $(3 \times 3)$ matrices ${\cal R}$ with determinant identically equal to one, such that ${\cal R}^{-1}= {\cal R}^{\top} $, with $\top$ denoting the transpose operator. Standard theory on random orthogonal rotations shows that $ {\cal R} = Q D Q^{-1}, $
with $Q$ denoting a matrix containing the eigenvectors of ${\cal R}$, and $D$ a diagonal matrix containing the associated eigenvalues. Also, we have that each eigenvalue $\lambda_k$, $k=1,2,3$, can be uniquely written as $\lambda_k = \exp(\imath \kappa_k)$, with $\imath$ being the unit imaginary number and $\kappa_k $ real, for $k=1,2,3$. Then, following \cite{Gachamaker}, one can define the $t$th real power of ${\cal R}$, as 
$$ {\cal R}^{t} = Q \big ({\rm diag}  \left ( \exp (\imath \kappa_k t) \right ) \big ) Q^{-1}. $$ 
Let $X$ be a Gaussian process on $\S^2$ with geodesically isotropic covariance $C_{\bS}(\s_1,\s_2)=\psi_{\bS}(d_{{\rm GC}}(\s_1,\s_2))$. Define
\begin{equation} \label{eqrem}  Z(\s,t)= X\left ( {\cal R}^{t} \s   \right ), \qquad \s \in \S^2, t \in \R.
\end{equation}
Then, the space-time covariance function with transport effect can be expressed as 
\begin{equation} \label{lagrangian2}
 C(\s_1,\s_2,u) = \mathbb{E}  \big ( C_{\bS}(d_{{\rm GC}}({\cal R}^{u} \s_1,\s_2)) \big ) , \qquad \s_1,\s_2 \in \S^2, \quad u \in \R,  \end{equation} 
where the expectation is taken with respect to the random rotation ${\cal R}$. The fact that the resulting covariance is still geodesically isotropic in the spatial component is nontrivial and is shown formally, for some specific choice of the random rotation ${\cal R}$,  in \cite{alegria2-dimple}, at least for the case of the sphere $\S^2$. Some comments are in order. The resulting field $Z$ in Equation (\ref{eqrem}) is not Gaussian (it is Gaussian conditionally on ${\cal R}$ only). Also, obtaining closed forms for the associated covariance is generally difficult. \cite{alegria2-dimple} provide some special cases. 

For the specification of the random rotation ${\cal R}$ in the Lagrangian covariance (\ref{lagrangian2}), various choices can be physically motivated and justified. The reader is referrerd to \cite{GGG07}  for a thorough discussion.  For instance, the random rotation might represent a prevailing wind as in \cite{gupta}. It might be a the westerly wind considered by \cite{haslett} or again, it might be updated dynamically according to the current state of the atmosphere.  
%
%
 Of course, the model (\ref{lagrangian2}) represents only a first step into Lagrangian modeling over spheres. The concept of transport effect should be put into a broader context, e.g. by following the discussion at pages 319--320 of \cite{crossi}, which shows that the Lagrangian model offered here is a very special case, obtained when the spatial field is moving at a constant velocity.  Innovative approaches to space-time data under transport effects can be found in \cite{wikle} and \cite{cressie_a4}.


\subsection{Covariance Models based on Chordal Distances} \label{cordal}

On the basis of the results in \cite{Yadrenko} and \cite{Yaglom2}, \cite{jun-stein, jun08} exploit the fact that, for any covariance function $C_{\bS}$ on $\R^{3}$, the function $C_{\bS}(d_{{\rm CH}})$ is a covariance function on $\S^{2}$. The Mat{\'e}rn function \citep[e.g.,][]{stein-book} is defined as 
\begin{equation} \label{matern}
 {\cal M}_{\nu}(x) = \sigma^2 \frac{2^{1-\nu}}{\Gamma(\nu)}x^{\nu} {\cal K}_{\nu} \left ( x \right ), \qquad x \ge 0, \end{equation}
where $\nu>0$ governs the mean square differentiability of the associated process $Z$ on $\S^2$, which is $k$ times mean differentiable if and only if $\nu > k$. Here, ${\cal K}_{\nu}$ is a modified Bessel function. 

The Mat{\'e}rn model coupled with chordal distance, that is ${\cal M}_{\nu}(d_{{\rm CH}})$,  is valid on $\S^2$ for any positive $\nu$. Unfortunately, arguments in \cite{gneiting2} show that ${\cal M}_{\nu}(d_{{\rm GC}})$ is a valid model on $\S^2$ only for $0 < \nu \le 1/2$, making its use impractical. Space-time models based on geodesic distance inherit this limitation. The same argument of the Mat{\'e}rn covariance is used in \cite{guinness} and \cite{jeong-jun} to assert that the chordal distance might be preferable with respect to the great circle distance.  For instance, a Mat{\'e}rn-Gneiting \citep{gneiting1, pz:11} type covariance function based on chordal distance might be easily implemented. For a positive valued temporal variogram $\gamma_{{\cal T}}$ with $\gamma_{{\cal T}}(0)=1$, the function
\begin{equation} \label{gneiting-matern} C(d_{{\rm CH}},u) = \frac{\sigma^2}{\gamma_{{\cal T}}(u)^{3/2}} {\cal M}_{\nu} \left ( \frac{d_{{\rm CH}}}{ \gamma_{{\cal T}}(u)} \right ), \qquad (d_{{\rm CH}},u) \in [0,2] \times \R
\end{equation}
is a covariance function on $\S^2 \times \R$ for any positive $\nu$. The spatial margin $C(d_{{\rm CH}},0)$ is of Mat{\'e}rn type and keeps all the desirable features in terms of differentiability at the origin. 

The use of chordal distance has been extensively criticized in the literature. For instance, because the chordal distance underestimates the true distance between the points on the sphere, \cite{Porcu-Bevilacqua-Genton} argue that this fact has a nonnegligible impact on the estimation of the spatial scale. Moreover, \cite{gneiting2} argues that the chordal distance is counter to spherical geometry for larger values of the great circle distance, and thus may result in physically unrealistic distortions. Further, covariance functions based on chordal distance inherit the limitations of isotropic models in Euclidean spaces in modeling covariances with negative values. For instance, a covariance based on chordal distance on $\S^2$ does not allow for values lower than $-0.21 \sigma^2$, with $\sigma^2$ being the variance as before. Instead, properties of Legendre polynomials \citep[see][]{szego} imply that correlations based on geodesic distance can attain any value between $-1$ and $+1$. Another argument in favor of the great circle distance is that the differentiability of a given covariance function depending on the great circle distance can be modeled by imposing a given rate of decay of the associated $2$-Schoenberg coefficients $\{b_k\}_{k=0}^{\infty}$, as shown in \cite{moller} or even modeling the rate of decay of the associated $2$-Schoenberg functions in (\ref{representation-d-schoenberg}), as shown by \cite{clarke}. 

A good alternative to the model in Equation (\ref{gneiting-matern}) is the space-time Wendland model based on the great circle distance, as defined  through Equation (\ref{wen-space-time}). Table~\ref{wendland2} highlights a striking connection between the two approaches, showing that Wendland functions allow for a parameterization of the differentiability at the origin in the same way that Mat{\'e}rn does. The compact support of the Wendland functions can imply some problems in terms of loss of accuracy of kriging predictors, as shown in \cite{stein-book}. However, recent encouraging results \citep{bevb:17} show that such a loss is negligible under infill asymptotics.


\subsection{Nonstationary and Anisotropic Space-Time Covariance Functions}

The literature regarding the construction of nonstationary space-time covariance functions is sparse. The works of \cite{jun-stein, jun08} are notable exceptions. The methods proposed by the authors are based on the coupling of the chordal distance with certain classes of differential operators. \cite{estrade} have generalized the Berg-Porcu \citep{berg-porcu} representation of geodesically isotropic-temporally symmetric covariance functions on $\S^d \times \R$. In particular, two generalizations are obtained: an extension of the Berg-Porcu class to the case of temporally nonstationary covariances and a new class that allows for local anisotropy. Anisotropy is also considered in the tour de force by \cite{hitz}, on the basis of chordal distances. Anisotropic components can be induced through the use of Wigner matrices, as explained in \cite{marinucci-peccati}. \\
More recently, \cite{alegria3} have considered geodesically isotropic space-time covariance functions that allow the separation of the linear from the cyclical component in the temporal lag. The authors show that such approach offers considerable gains in terms of predictive performance, in particular in the presence of temporal cyclic components or in the presence of nonstationarities that are normally removed when detrending the data. A recent discussion about nonstationary approaches can be found in \cite{fuglstad} who work under the SPDE framework.

\section{Practical Approaches} \label{practical}

\subsection{Dynamical Approaches} \label{dynamical}


When analyzing massive data sets arising from, for instance,  remotely sensed networks or satellite constellations, climate models, or reanalysis data product, a model that fully specifies the covariance function becomes impractical. Indeed, it would require a prohibitive amount of information to be stored in the covariance matrix, as well as a prohibitive number of flops and iterations for maximizing the likelihood or exploring the posterior distribution. Hence, a compromise between inferential feasibility and model flexibility must be achieved. These limitations arise independently on the space where the random field is defined. For the case of the sphere, the drawbacks are magnified by the considerably more sparse literature on the construction of valid global space-time processes, as shown in the previous section, as well as  by the extremely large size of global data. 

In Euclidean spaces, a very popular approach to drastically reduce the complexity is to separate the spatial and temporal components and to describe the dynamics of the process by specifying its evolution as a function of the past. Variability is then achieved by assuming a random spatial innovation. 
For modeling global climate fields this approach has been further simplified by modeling the temporal dynamics through covariates only \citep{Furr:Sain:Nych:Meeh:07,Gein:Furr:Sain:15}. \\
The dynamical approach has received strong support from reference textbooks in space-time modelling \citep[see e.g.][]{crossi} and recent years have also seen some development of this methodology in the context of global space-time data \citep{cas13}. In this regard the EM algorithm has been efficiently implemented by \cite{finasi1} and successfully used at continental level for multivariate spatio temporal data \citep{finasi2}. Thanks to the Mat{\'e}rn covariance implemented on the sphere, this code works also at the global level.

Let us assume, with no loss of generality,  that the process has zero mean and that it is observed at some locations  $\s_1,\ldots,\s_N \in \S^2$, for equally spaced time points $t$. We denote $\bZ_t=(Z(\s_1,t),\ldots, Z(\s_N,t))^{\top}$ the process at time $t$, and we specify its dynamic through the following recursive equation
\begin{equation}\label{dynam_discrete}
\bZ_t=\mathcal{E}_t(\bZ_{t-1}, \bZ_{t-2}, \ldots, \bZ_{t-p})+\bsy{\varepsilon}_t,
\end{equation}
where $\mathcal{E}_t$ incorporates the evolution of past trajectory of the process up to time $t-p$, and $\bsy{\varepsilon}_t = (\varepsilon(\s_1,t),\ldots,\varepsilon(\s_n,t))^{\top}\iid \mathcal{N}(\bsy{0},\bsy{\Sigma})$ is an innovation vector with purely spatial global covariance matrix $\bsy{\Sigma}$.

The considerable benefit of such an approach is that the spatio-temporal structure of the model is specified by the dynamical evolution $\mathcal{E}$ and the spatial innovation $\bsy{\varepsilon}_t$. Hence, the temporal and spatial part of the model are de-coupled, and this allows for a considerably more convenient inference.  Recent work on satellite data has proposed to couple the dynamical approach dimension reduction techniques, and in particular Fixed Rank Kriging \citep[see][]{cressie_a3, cressie_a2} to further reduce the parameter dimensionality, and to achieve a fit for very large data sets \citep[Fixed Rank Filtering, see][]{cressie_a1, cressie_a4}. While these approaches have been very effective for interpolating large data sets, they do not explicitly account for the spherical geometry, but rather project the data on the Euclidean space. Here, we consider dynamical models for space-time global data with an explicit definition of the covariance function in $\S^2$. 

A particularly appealing class of models with (\ref{dynam_discrete}) is the Vector AutoRegressive model VAR($p$), defined as 
\begin{equation}\label{dynam_var2}
\bZ_t=\sum_{j=1}^p \bsy{\Phi}_j \bZ_{t-j}+\bsy{\varepsilon}_t,
\end{equation} where $\bsy{\Phi}_j$ are  $N\times N$ matrices that encode the temporal dependence at lag $t-j$. This model results in a space-time precision matrix that is block banded, and hence would greatly reduce the storage burden arising from massive data sets as shown in Section \ref{data}.

\subsubsection{The Innovation Structure}

While models such as \eqref{dynam_var2} allow for a substantial computational saving, in typical climate model applications the number of locations is larger than $10,000$. Thus, even the likelihood of the innovation $\bsy{\varepsilon}_t \iid \mathcal{N}(\bsy{0},\bsy{\Sigma})$ is impossible to evaluate for memory issues on a laptop. Therefore, to perform feasible inference on the full data set, additional structure on the model or spatial design of the data must be leveraged on. \cite{cas13} consider the Fourier transform cross longitude of an axially symmetric process, that is
\begin{eqnarray}\label{lo_reg}
		&& \bsy{\varepsilon}_t(\phi,\vartheta) =  \sum_{k=0}^{\infty} e^{\imath \vartheta k}\mathfrak{f}(k;\phi)\widetilde{\varepsilon}_t(k;\phi), \nonumber\\
		&& \nonumber \\ 
		&& \mbox{corr}\big ( \widetilde{\bsy{\varepsilon}}_{t}(k;\phi),\widetilde{\bsy{\varepsilon}}_{t}(k';\phi')\big )  = \delta_{k=k'} \; \rho(k;{\phi,\phi'}), \qquad \phi,\phi'  \in [0,\pi], \vartheta \in [0,2 \pi),
\end{eqnarray} 
\noindent with $\widetilde{\varepsilon}_t(k;\phi)$ being the Fourier process for wavenumber $k$ and latitude $\phi$. Here, $\mathfrak{f}(k;\phi)$ is the spectrum at latitude $\phi$ and wavenumber $k$, and, for any pair of latitudes $(\phi,\phi')$, the function  $\rho(k;{\phi,\phi'})$ defines a spectral correlation (it is also called \textit{coherence}). \cite{jun08} showed that there is a considerable computational benefit if the data are on a $N = N_{\phi} \times N_{\vartheta}$ (latitude $\times$ longitude) regular grid over the sphere and if the process is axially symmetric. Indeed, they showed how the resulting covariance matrix is block circulant and, most importantly, block diagonal in the spectral domain, thus requiring only $O(N_{\phi} \times N_{\vartheta}^2)$ entries to store instead of $O( N^2_{\phi} \times N^2_{\vartheta})$, and $O(N_{\phi} \times N_{\vartheta}^3)$ flops instead of $O(N_{\phi}^3 \times N_{\vartheta}^3)$. \cite{cas13} showed that such structure can be used to perform inference to massive data sets from computer model ensembles (in the range of $10^7$ to $10^9$ data points), by first estimating the spectrum $\mathfrak{f}(k;{\phi})$ for each of the $N_{\phi}$ latitudinal bands in parallel, and then conditionally estimating the structure of the spectral correlation $\rho(k;{\phi,\phi'})$. This approach has been extended to analyze three-dimensional temperature profiles in a regular grid for a data set larger than one billion data points, allowing for an extension of axially symmetric models in three dimensions \citep{cas16}. 

The spectral approach is not just a mere strategy to simplify inference, but also a key to generalize axially symmetric processes to exhibit nonstationarities, by imposing additional structure in the process transformed in the spectral domain, while still guaranteeing positive definiteness of the corresponding covariance functions. In particular, it is still possible to retain the computational convenience of the gridded geometry while assuming nonstationary models cross longitudes. \cite{cas14} explored this idea by assuming that $\widetilde{\bsy{\varepsilon}}_t(k;\phi)$ in \eqref{lo_reg} are correlated across frequencies, with a fully nonparametric dependence structure to be estimated using time replicates, and they showed how the axially symmetric assumption is badly violated for temperature data.  

In order to allow for ocean transitions, \cite{cas17} impose the spectrum $\mathfrak{f}(\cdot;\phi)$ in \eqref{lo_reg} to depend on longitude as well, and call it \textit{evolutionary spectrum} \citep{pries65}. Given two spectra, $\mathfrak{f}_{i}(k; \phi)$, $i=1,2$ and a mapping $b_{{\rm land}}:[0,\pi] \times [0,2\pi) \to [0,1]$, an evolutionary spectrum is attained through the convex combination
\begin{equation}\label{quax_eq1}
\mathfrak{f}(k;{\phi, \vartheta})=\mathfrak{f}_1(k;\phi) b_{{\rm land}}(\phi,\vartheta)+\mathfrak{f}_2(k;\phi)\left\{1-b_{{\rm land}}(\phi,\vartheta)\right\},
\end{equation}
so that $b_{{\rm land}}$ plays the role of modulating the relative contribution of the land regime.
\cite{cas17} showed how this approach is able to capture the majority of the nonstationarity occurring over a single latitudinal band. Recently, \cite{jeong17} proposed an extension to this approach to incorporate mountain ranges in the evolutionary spectrum in the context of wind fields. Even if the data are not on a regular grid, \cite{hor15} showed that it is still possible to interpolate satellite data on a gridded structure using interpolated likelihoods to leverage on spectral methods. They propose to first perform kriging on the original observations, interpolate them over a grid, and evaluate the likelihood of these pseudo-observations. 

The spectral approach to global data allows to achieve a fit for data sets of remarkable size, and allows to generalize axially symmetric models to capture longitudinal nonstationarities. Such a great improvement, however, comes at the cost of a loss of interpretability of the notion of distance. Indeed, the wavenumber $k$ differs in physical length for different latitudes. Thus, interpreting the dependence structure across latitudes is problematic, especially near the poles, where the physical distance among points is very small. Additionally, a process specified with a latitudinally varying spectrum is not, in general, mean square continuous at the poles. Conditions for regularity at these two singular points have been discussed in \cite{cas17}.


\subsubsection{The Temporal Structure}

For sufficiently aggregated data, (\ref{dynam_var2}) can be further simplified by assuming $\bsy{\Phi}_j=$diag($\phi_{ii;j}$), so that the inference can be performed in parallel for each location. Simple diagnostics have shown that this structure is adequate for data spanning from multi-decadal to monthly data, while sub-monthly data would likely require a more sophisticated neighboring-dependent structure. A key feature of some geophysical processes is their dependence along thousands of miles. These teleconnections would require a more complex structure for $\bsy{\Phi}_k$ that would link far away locations.

\subsection{The SPDE Approach}


We start with a brief description of the SPDE approach as in \cite{Lindgren} and \cite{bolin}. We skip all the mathematical details and will stick to the main idea. Clearly, we shall detail our exposition by working on the sphere. The main idea in \cite{Lindgren} is to evade from a direct specification of the Mat{\'e}rn function ${\cal M}_{\nu}$ as defined in Equation (\ref{matern}) in order to be able to work on any manifold, which of course includes the sphere $\S^2$. Taking verbatim from \cite{Lindgren}, {\em our main objective is to construct Mat{\'e}rn fields on the sphere, which is important for the analysis of global spatial and spatiotemporal models.} Also, the authors note that they want to avoid to use the Mat{\'e}rn covariance adapted with chordal distance, in order to avoid {\em the interpretational disadvantage of using chordal distances to determine the correlation between points.}

The solution is to consider the SPDE having as solution a Gaussian field with Mat{\'e}rn covariance function: for a merely spatial process $X$ on $\S^2$, the authors study the SPDE defined through
\begin{equation}
\label{SPDE} 
(\kappa^2 -\Delta  )^{\alpha} X(\s) = {\cal W}(\s), \qquad \s \in \S^2,
\end{equation}
where $\kappa>0$, $\Delta$ is the Laplace-Beltrami operator, and ${\cal W}$ is a Gaussian white noise on the sphere.
Here, the positive exponent $\alpha$ depends on the parameter $\nu$ in (\ref{matern}) as well as on the dimension of the sphere. \\
In order to provide a computationally convenient approximation of (\ref{SPDE}), \cite{Lindgren} find a very ingenious computationally efficient Hilbert space approximation. Namely, the weak solution to (\ref{SPDE}) if found in some approximation space spanned by some basis functions. The computational efficiency is then attained by imposing local basis functions, that is basis functions being compactly supported. This all boils into approximating the field $X$ with a Gaussian Markov field, $\boldsymbol{x}$, with precision matrix $\boldsymbol{Q}$. This idea is then generalized in \cite{bolin} through nested SPDE models.

The SPDE approach proposed in \cite{Lindgren} is ingeniously coupled with hierarchical models by \cite{cameletti} to provide a space-time model. Our exposition is adapted to the domain $\S^2 \times \R$. The authors propose a model of the type 
\begin{eqnarray*}
Z(\s,t)&=& \boldsymbol{\Upsilon}_t^{\top}\boldsymbol{\beta} + W_t(\s) + \varepsilon_t(\s),  \\
W_t(\s)&=& a W_{t-1}(\s) + {\cal W}_{t}(\s), \qquad (\s,t) \in \S^2 \times \R.
\end{eqnarray*}
Here, $\boldsymbol{\Upsilon}_t$ is the vector of covariates and $\boldsymbol{\beta}$ is a parameter vector. The process $\varepsilon_t(\s)$ expresses the measurement error at time $t$ and location $\s$ on the sphere. The latent process $W_t(\s)$ is autoregressive over time and ${\cal W}_t(\s)$ is a purely spatial process with Mat{\'e}rn covariance function as in Equation (\ref{matern}). The bridge with the SPDE approach thus comes from the following assumptions: ${\cal W}$ is approximated through a Gaussian Markov structure with a given precision matrix. The same approximation, but with another precision matrix, is then assigned to the process $W_1(\s)$. Using the dynamic approach, \cite{cameletti} shows that the whole latent process $W$ has a Gaussian Markov structure with a sparse precision matrix that can be calculated explicitly. 
A direct space-time formulation of the SPDE approach is also suggested in \cite{Lindgren}. 
%
\section{Data example} \label{data}

The 2015 El Ni{\~n}o Southern Oscillation (ENSO) was registered as the most intense over the last two decades \citep{wol10}. In September-October 2015, strong ENSO conditions coupled with the Indian Ocean Dipole suppressed precipitation and resulted in dry highly flammable landscape in Equatorial Asia. The extent of the haze from fires in the region was the largest recorded since 1997, and the increased particulate matter concentration over the densely populated area resulted in tens of millions of people being exposed to very unhealthy to hazardous air quality \citep[as defined by the Pollutant Standard Index][]{psi} and resulting into one of the worst environmental disasters on record. The assessment of exposure and mortality from this event is critical for the implementation of future mitigation strategies, and the estimation of these numbers with the associated uncertainty has received widespread media attention \citep{kop16} and even official of estimates from scientific studies from local governments. While it is possible to focus on regional data to provide local exposure estimates, such simulations are very hard to perform and have been attempted only by \cite{crippa16}, while all other studies have been focused on more affordable and readily available data on a global scale. 

Here, we focus on this event using the MERRA2 reanalysis \citep{mol10} data of daily (aggregated from 3-hourly) Aerosol Optical Depth (AOD), see Figure \ref{data-figure1} and movie in the online supplement. Values around 0.01 mean clear sky conditions, 0.4 very hazy condition and around 1 extremely toxic. We have also removed the trend, where for every point the monthly mean (location-wise) has been subtracted. We use this data set to compare different statistical models fitted globally but focused in the region of interest. The practical approaches discussed in Section \ref{practical} would allow for a full estimation over the two months (for a total of more than 12 million points), but that would not be possible with second order approaches. Therefore, we limit our analysis to the first $6$ days of October. While a detailed study would require an additional comparison in terms of exposure maps from population estimates, we decided to avoid this for the sake of simplicity and brevity.

\begin{figure}
\vspace*{-6mm}\hspace*{10mm}
\includegraphics[height=5cm,width=\textwidth-2cm]{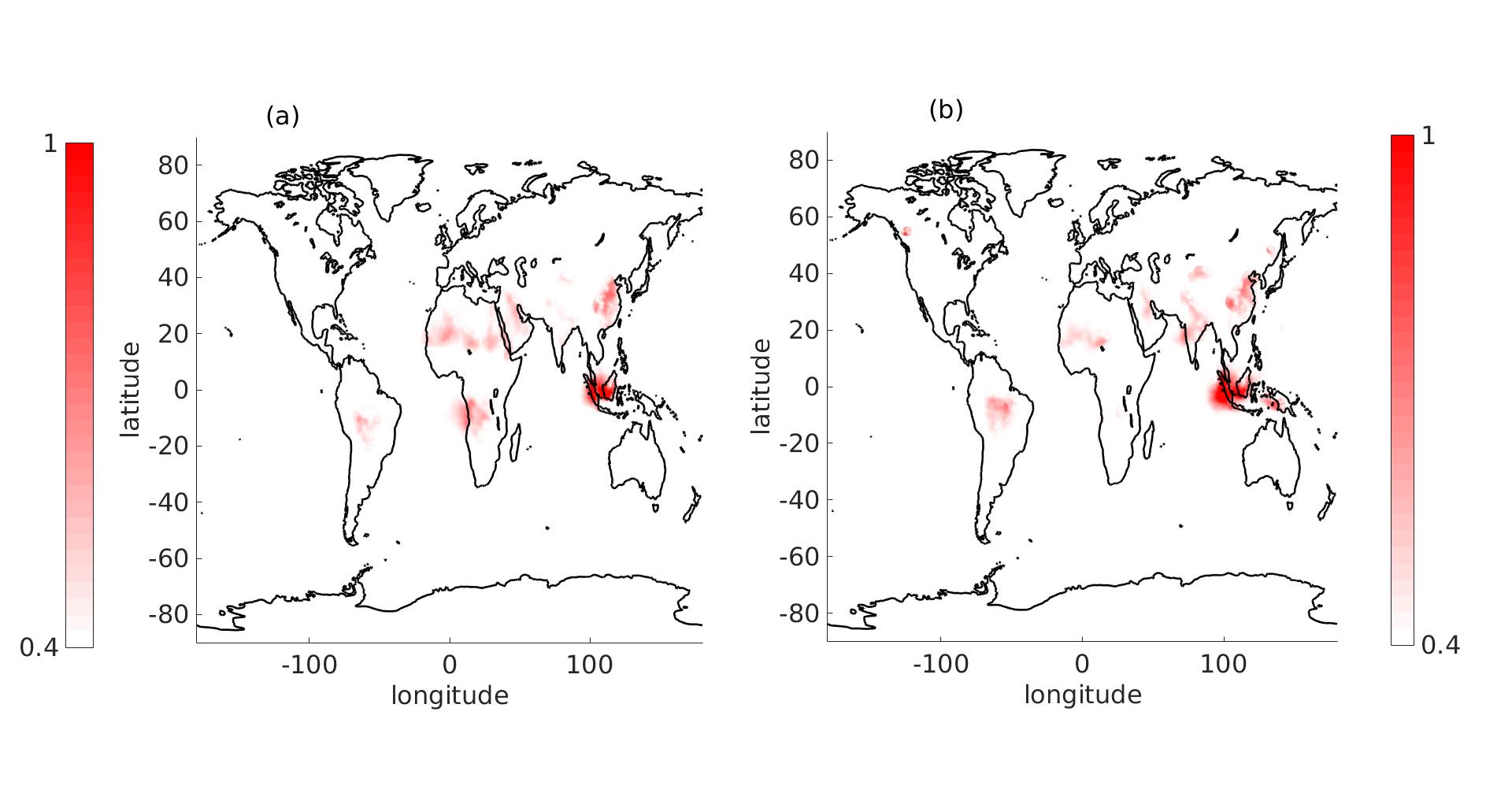}\\[-13mm]
\caption{Averages for AOD for the months of September (a) and October (b). \label{data-figure1}}
\end{figure}


\subsection{The Second Order Approach}

We fit the data set with a second order approach as detailed in Section \ref{construction}. Despite the subsampling in time, the data set is still too large for a full analysis. Hence, a further subsampling in space a grid of 10 degrees in latitude and longitude was performed.  Since several points concentred on the poles, this can produce numerical problems. We thus remove a small portion of observations, which correspond to  latitudes greater than $85$ degrees and lower than $-85$ degrees.  \\
We have considered four models:
\begin{description}
\item[Model 1.] A modified Gneiting covariance, as detailed by Equation (\ref{ex1}).
\item[Model 2. ] A dynamically compactly supported covariance, as in Equation (\ref{wen-space-time2}). 
\end{description}
To explain the choices for Models 3 and 4, we consider the Gneiting \citep{gneiting1} function 
\begin{equation}
\label{ex3}
K(r ,u)  = \frac{\sigma^2 }{\left( 1+\frac{r}{b_S} \right)^3} \exp\left ( - \frac{ |u| }{ b_T \left( 1+\frac{r}{b_S} \right)^{1/2}} \right ).
\end{equation}
\begin{description}
\item[Model 3.] A Gneiting type covariance coupled with the great circle distance, that is $\psi(d_{{\rm GC}},u)=K(d_{{\rm GC}},u))$, with $K$ as in (\ref{ex3}). 
\item[Model 4.] A Gneiting type covariance $K$ as in Equation (\ref{ex3}), coupled with the chordal distance,  $d_{{\rm CH}}$.
\end{description}
The proposed models have three parameters, $(\sigma^2, b_S, b_T)^\top$ indicating the variance, spatial and temporal range respectively. Inference was performed using a pairwise composite likelihood (CL) approach, which provides approximate but asymptotically unbiased estimates to very large data sets. We use the CL method with observations whose spatial distance is less than 6,378 kilometers (equivalent to $1$ radians on a unit sphere).

Table~\ref{estimates} shows CL estimates and the Log-CL value at the maximum, and both models have a similar performance in terms of CL. Figure \ref{variograms} illustrates the empirical spatial (semi) variogram in terms of the great circle distance, at different temporal lags, versus the theoretical models. The models are indeed able to capture this large-scale feature of the data by fitting well the empirical variograms.

\begin{table}[b]
\caption{Composite likelihood estimates, Log-CL value at the maximum and predictive scores. The units for the spatial and temporal scales are km and days, respectively.\label{estimates}}
\renewcommand{\baselinestretch}{1.5}\large\normalsize
\centering
\begin{tabular}{|c|ccccccc|} \cline{2-8} 
\multicolumn{1}{c|}{}& $\widehat{\sigma}^2$  &  $\widehat{b}_S$  & $\widehat{b}_T$   &   Log-CL   & MPSE  &  LSCORE  & CRPS  \\ \hline 
Model 1 & $8.564\times 10^{-3}$ & 816.38 & 6.320&    2223640 &$5.069 \times 10^{-3}$&$-1.052$& $0.107$   \\      
Model 2 & $8.564\times 10^{-3}$ &3845.93 & 6.255&    2223574 &$5.307 \times 10^{-3}$&$-0.935$& $0.106$   \\      
Model 3 & $8.562\times 10^{-3}$ &1651.90 & 2.202&    2223928 &$4.442 \times 10^{-3}$&$-1.288$& $0.117$   \\     
Model 4 & $8.563\times 10^{-3}$ & 765.36 & 5.601&    2223828 &$4.527 \times 10^{-3}$&$-1.281$& $0.118$   \\     
\hline          
\end{tabular}
\end{table}

\begin{figure}[h]
\centering
\includegraphics[scale=0.32]{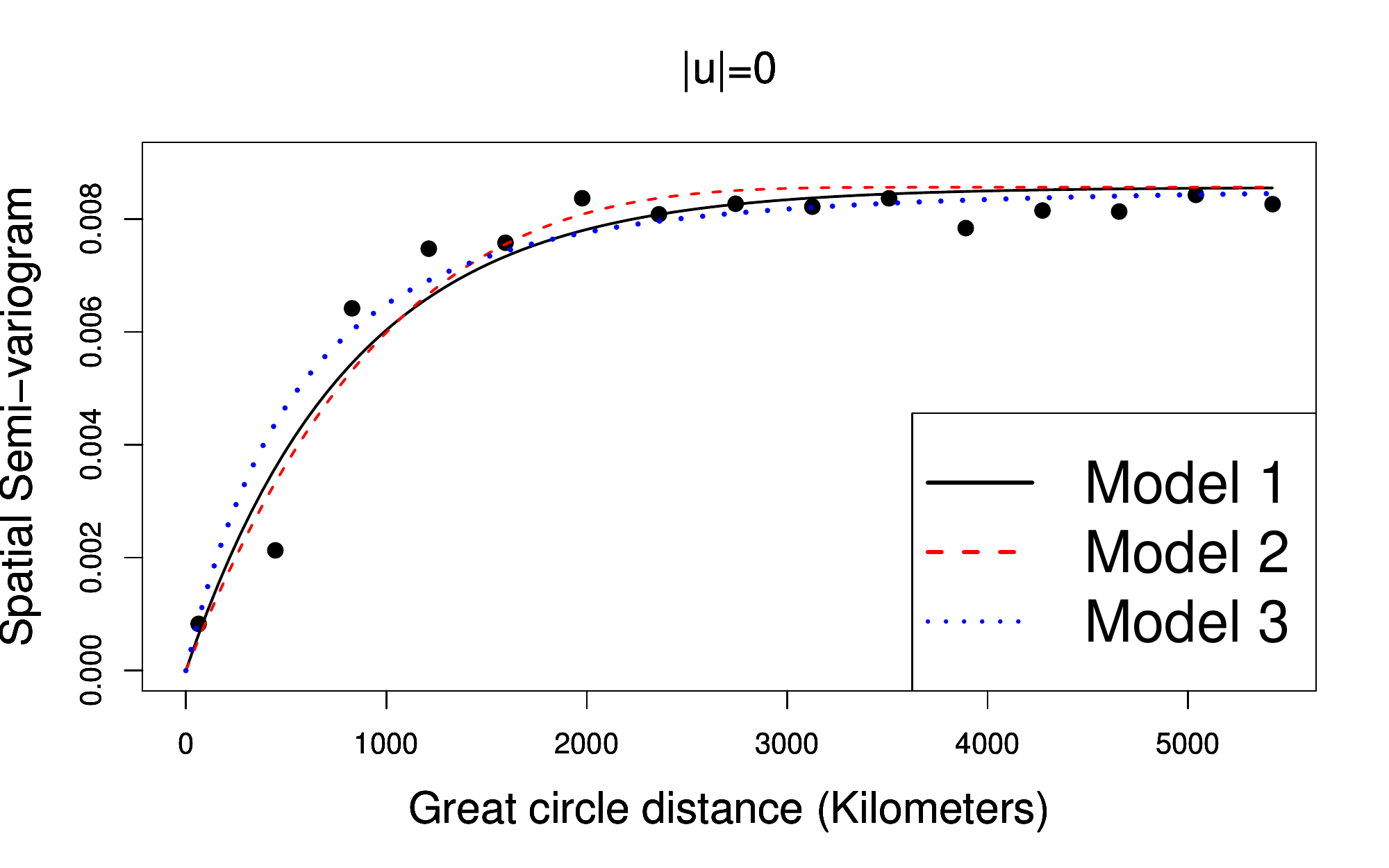} \includegraphics[scale=0.32]{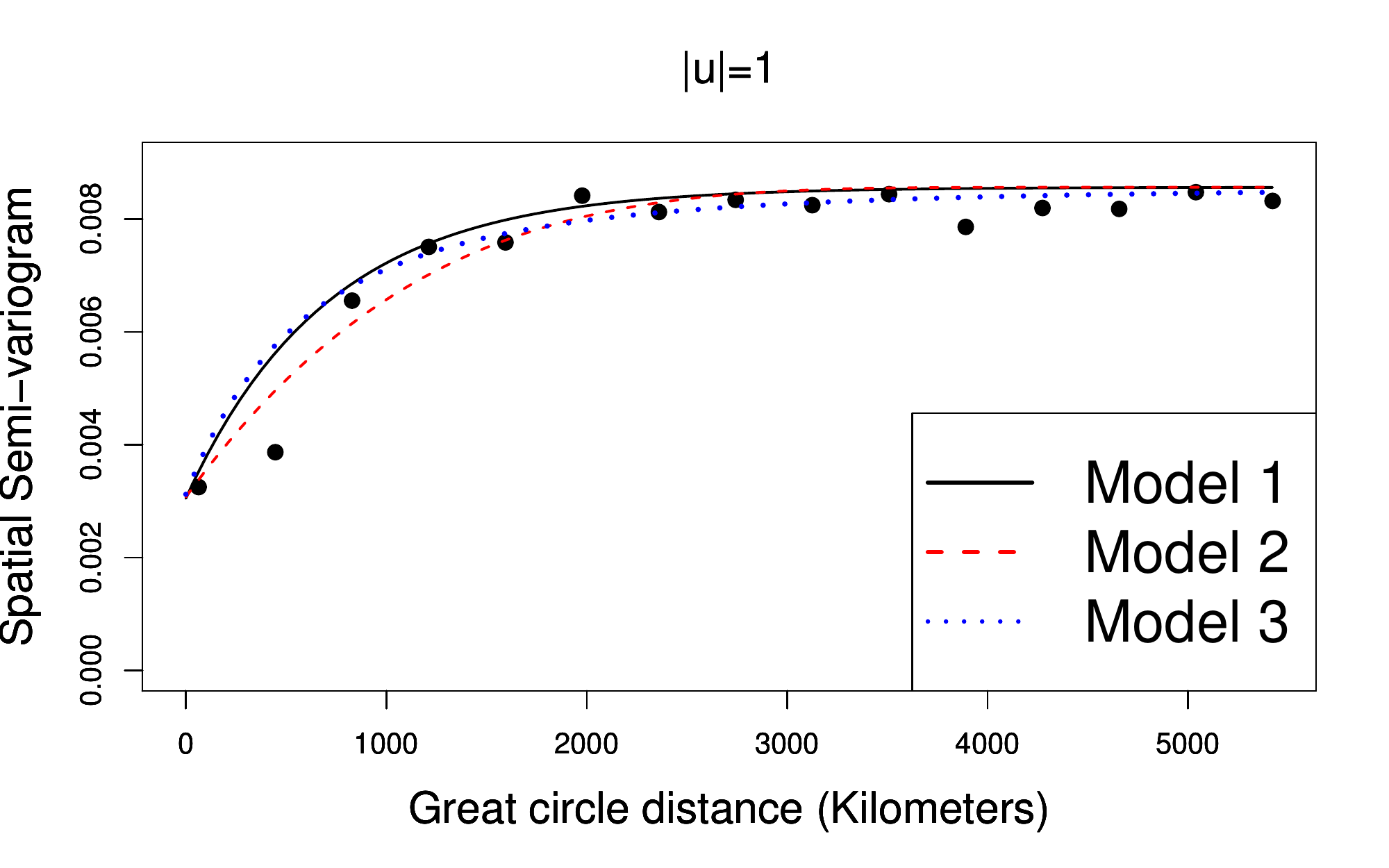}
\caption{Empirical spatial (semi) variograms versus theoretical covariances according to Models 1, 2 and 3,  at different temporal lags. \label{variograms}}
\end{figure}

We now compare the models in terms of their predictive performance. We use the kriging predictor and a drop-one prediction strategy. We consider  the  following indicators: Mean Squared Prediction Error (MSPE), Log-Score (LSCORE) and Continuous Ranked Probability Score (CRPS).  Table~\ref{estimates} contains the indicators for each model. Small values of these indicators suggest better predictions.    

Models 3 and 4 produce better predictive results with respect to Models 1 and 2. In particular,   Model 3 generates an approximate improvement of $2\%$ with respect to Model 4, in terms of MPSE.   Model 3,  based on the great circle distance,   outperforms Model 4 in terms of LSCORE and CRPS.

\subsection{Dynamical approach}

We now fit the same data set with a dynamical model. We choose \eqref{dynam_var2} with $p=1$, i.e., a VAR(1) process, with a diagonal autoregressive structure. Providing a location-specific temporal structure is likely to be too flexible for data subsampled for only 6 days, but further model selection approaches to reduce model complexity were deemed out of the scope of this work. Since the data are gridded, we choose a spectral model \eqref{lo_reg}, with a latitudinally varying spectrum, and a coherence for the same wavenumber, but independence otherwise. While the model was subsampled in time for a comparison with the second-order approach, the inference was scalable and the fit of the entire data set did not require a significant additional computational time. 

Even if the likelihood evaluation can sidestep the big $N$ problem by storing the results in the spectral domain via a Whittle likelihood \citep{whittle}, it is not possible to obtain a maximum likelihood for the entire model. Because the model's structure, comprising of hundreds of parameters, the optimization over the entire space would be impossible. Therefore, the estimation is performed step-wise. First, the temporal dependence is estimated. Then, we estimate the longitudinal dependence, then the latitudinal dependence, and hence the parameter estimates are to be regarded as local approximations.

\begin{figure}
\centering\vspace*{-2mm}
\includegraphics[scale=0.3]{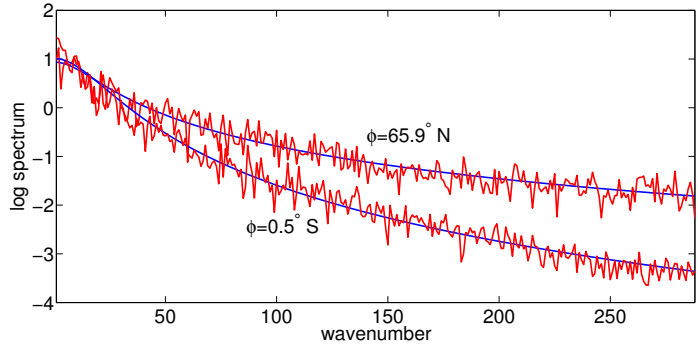}
\caption{Fitted parametrical (blue) and non parametrical (i.e., periodogram red) log spectrum for two different latitudinal bands, one at the equator, one at high northern latitudes. The periodogram was obtained by averaging across all longitudes and times.  \label{fit_spectrum}}
\end{figure}
The spectral model allows for a different spectral shape at different latitudes, and it  is flexible enough to capture very different behaviours, as illustrated in Figure~\ref{fit_spectrum}. Indeed, AOD residuals near the equator display a much smoother behaviour than at high latitudes, where the spectrum is more flat, but the model is able to fit both behaviours naturally.

\subsection{Comparison for Equatorial Asia}

While both models are fitted globally, the interest lies in their relative performance in the area of interest, i.e., in Equatorial Asia.
Table~\ref{mod_fit} shows a comparison of the aforementioned models in terms of the marginal likelihood in this area, defined as all points whose latitudes is between $-11.3$ and $15.3$ degrees ($N_\phi=53$) and whose longitude is between $93$ to $137$ degrees ($N_\vartheta=71$). The dynamically specified model outperforms almost uniformly Models 1 and 2 in the second order approach: it is faster, richer in complexity yet achieving a overwhelmingly better fit, so it is clearly more suitable for a more detailed analysis in the area.

While these results are strongly in favor of the dynamically approach, it must be pointed out that the particular setting of the global space-time data analyzed, i.e., a regular grid in space and equal observations in time, is such that a spectral model is clearly the best choice. An application with more complicated geometries such as with satellite data, would have required an adaptation of spectral methods along the lines of \cite{hor15} and, likely, a worse fit. While such a comparison with an irregular design would have been added to the discussion, we decided to avoid it for the sake of brevity. 


\renewcommand\arraystretch{1.3}
\begin{table}[tbhp]
\caption{{\small Comparison between different models in terms of number of parameters (excluding the temporal ones), computational time (minutes), normalized loglikelihood, and Bayesian Information Criterion for $K=6$ days from October 1st to October 6th 2015.}}\label{mod_fit}
\centering
{\begin{tabular}{|c|cccc|}
\cline{2-5} 
\multicolumn{1}{c|}{}    & $\#$ param   &Time  (minutes)  & $\Delta$-loglik$/(N_{\phi}N_{\vartheta}K)$  & BIC$\times 10^4$ \\\hline
Model 1             & $3$   & $238$  & $-3.24$ & $-4.50$  \\
Model 2             & $3$   & $236$  & $-3.24$ & $-4.55$  \\
Model (\ref{lo_reg})& $161$ & $4.2$     & $0$  & $-19.0$  \\ \hline
\end{tabular}}
\end{table}

\section{Research Problems}

This section is devoted to a list of research problems that we consider relevant for future researches. 
\begin{description}
\item[1. Nonstationary Space-Time Covariances.] The models proposed in Section 3 are geodesically isotropic and temporally symmetric. Since real phenomena on the globe are notably nonstationary, it is necessary to use those models as building blocks for more sophisticated constructions. In particular, the development of nonstationary models is necessary and a promising direction of research is to extend the work of \cite{guella1, Guella2, Guella3} who define kernels over products of $n$-dimensional spheres. 
Another direction of research might be to take into account differences in local geometry of the sphere representing planet Earth. Thus, the natural solution is to work on processes evolving temporally over Riemannian manifolds. In this direction, the works of \cite{menegatto-peron} and  the recent work by \cite{Barbosa} might be very useful. 
\item[2. Models based on Convolutions.] Convolution arguments have been used for Gaussian processes defined over spheres (no time) in order to index the associated fractal dimension. \cite{Hansen} show that Gaussian particles provide a flexible framework for modelling and simulating
three-dimensional star-shaped random sets. The authors show that, if the kernel is a von Mises-Fisher
density, or uniform on a spherical cap, the correlation function of the associated
random field admits a closed form expression. We are not aware of any convolution argument for space-time covariance functions. The work of \cite{ziegel} might be very useful in this direction. 
\item[3. Physical-based Constructions.] Constructions based on dynamical models or moving averages have been proposed by \cite{Baxevani} and \cite{schlather}. We are not aware of such extensions to the case $\S^2 \times \R$, but certainly it would be worth being studied.  Some other constructions based on physical characteristic of the space-time process might be appealing for modeling several real processes. In this direction, it would be desirable to study the approaches proposed in \cite{erosthe} as well as \cite{kolovos1, erosthe2, santili} and \cite{kolovos2}. 
\item[4. Multivariate Space-Time Models.] Often, several variables are observed over the same spatial location and same temporal resolution. Thus, there is substantial need of space-time multivariate covariance models being geodesically isotropic or axially symmetric in the spatial component. The literature in this subject is very sparse, with the notable exceptions of \cite{jun08} and \cite{alegria}. 
\item[5. Mat{\'e}rn Analogues over Spheres.] Find the counterpart of the Mat{\'e}rn covariance function on the sphere. This would allow to have a covariance function that indexes the differentiability at the origin of the associated Gaussian field. Notable attempts have been made by \cite{guinness}, with partial success. \cite{moller} suggest to model the $n$-Schoenberg coefficients (see Appendix A for details) imposing a given rate of decay, but this does not allow for explicit closed forms. 
\item[5. Compact Supports with Differentiable Temporal Margins.] 
A potential drawback of the space-time construction as in Theorem \ref{gen_w} is that it only allows for temporal margins being non differentiable at the origin. An important step ahead would be to improve such a limitation.  Following the arguments in the proof of Theorem \ref{gen_w}, the crux would be in proving that, for some mapping $h:[0,\infty) \to \R$, the function
$$ h(u^2)^{\alpha+d} \mathstrut_1F_{2}\bigg \{1+k; \frac{\mu+ 2}{2}+k, \frac{\nu+ 3}{2}+k; -n^2 h(u^2)^2/4 \bigg \}, \qquad u \in \R, \quad n \in \mathbb{N}, $$
is positive definite for all $n \in \mathbb{N}$. 
\def\bomega{\boldsymbol{\omega}}
\item[6. Spectral Constructions {\em {\`a} la Stein}.]
\cite{stein-jasa} proposes a class of spectral densities in $\R^d \times \R$ of the type
$$ \mathfrak{f}(\bomega,\tau) = \left ( (1+\|\bomega\|)^{\alpha_1} + (1+ |\tau|)^{\alpha_2} \right )^{-\alpha_3}, \qquad (\bomega,\tau) \in \R^d \times \R, \quad \alpha_i>0, $$
for $i=1,2,3$.   Under the condition $\alpha_3 < \alpha_1/d+\alpha_2$, $\mathfrak{f}$ is in $L_1(\R^{d}\times \R)$. The partial Fourier transforms allow for indexing the differentiability at the origin in a similar way as the Mat{\'e}rn covariance function. Further, the resulting covariance is smoother away from the origing than at the origin, which is important as reported in Theorem 1 in \cite{stein-jasa}. 
There is no analogue of $\mathfrak{f}$-based constructions for the class ${\cal P}(\S^2, \R)$. One should necessarily start from Berg-Porcu \citep{berg-porcu} characterization and try to find a related approach that allows to obtain such a construction on spheres cross time. 
\item[7. Local Anisotropies and Transport Effects.]
The geodesically isotropic models should be extended to allow for local anisotropies as in \cite{paciorek}. In this direction,  a major step should be made in order to generalize the Lagrangian model in Equation (\ref{lagrangian2}) to the nonstationary case.
\item[8. Strictly Positive Definite Functions.] 
A function $C: (\S^2 \times \R)^2 \to \R$ is called strictly positive definite when inequality 
$$  \sum_{i,j}^N c_i C\big ( (\s_i,t_i),(\s_j,t_j) \big ) c_j >0,  $$
holds for any $\{ c_k \}_{k=1}^N \subset \R$, unless $c_1=\dots=c_N=0$, and $\{\s_k,t_k\}_{k=1}^N \subset \S^d \times \R$. 
There is substantial work on strict positive definiteness and the reader is referred to \cite{chen, Menegatto-strict, Menegatto-strict2} and \cite{menegatto-peron}. A characterization of the subclass of ${\cal P}(\S^n,\R)$ (see Appendix A) with members $\psi$ such that the corresponding covariances $C$ are strictly positive definite is still elusive.  
\item[9. Walks through Dimensions.]
The literature on walks through dimensions is related to operators that allow, for a given positive definite function on the $n$-dimensional sphere, to obtain new classes of positive definite functions on $n'$-dimensional spheres, with $n \ne n'$. The application of such operators has consequences on the differentiability at the origin of the involved functions.   
Walks on spheres have been proposed by \cite{beatson}, \cite{ziegel} and by \cite{massa}, this last extending previous work to the case of complex spheres. It would be timely to obtain walks through dimensions for the members of the classes ${\cal P}(\S^n,\R)$, for $n$ a positive integer.  
\item[10. Optimal Prediction on Spheres cross Time.] Much work needs to be done in order to assess asymptotic optimal kriging prediction over spheres cross time when the covariance is missspecified. The work of \cite{arafat} opens a bridge between equivalence of Gaussian measures and asymptotic optimal prediction over spheres. A relevant direction of research would be to evaluate the screening effect on spheres cross time. After the works of \cite{stein-book}, there is nothing related to spectral behaviours over spheres that allow for evaluating the corresponding screening effect. 
\item[11. Fast Simulations on Spheres.] \cite{lang-schwab} and \cite{clarke} propose fast simulations through truncation of Karhunen-Lo\`{e}ve expansions. It would be very interesting to propose analogues of circulant embedding methods, where the difficulty is mainly due to the fact that it is not possible to set a regular grid on the sphere. 
\item[12. Chordal vs Great Circle Again.] Moreno Bevilacqua (personal communication) points out that he has tried the following experiment. Simulate any set of points on the sphere cross time. Take any element $\psi$ from the class ${\cal P}(\S^2,\R)$. Replace the great circle distance with the chordal distance and calculate the corresponding matrix realizations. With all the experiments, he always found strictly positive eigenvalues. Thus, the question is: suppose that $\psi \in {\cal P}(\S^2,\R)$. Then, is it true that $\psi (d_{{\rm CH}})$ is positive definite on $\S^2 \times \S^2 \times \R$? 
\end{description}

\section*{Appendix}

\subsection*{Appendix A: Mathematical Background}
We need some notation in order to illustrate the following sections. 
This material follows largely the exposition in \cite{berg-porcu}. 

Let $n$ be a positive integer. We denote $\S^n$ the $n$-dimensional unit sphere of $\R^{n+1}$, given as
\begin{equation}\label{eq:sphere}
\mathbb S^n=\{\s \in\mathbb R^{n+1}\mid \|\s\|=1\}, \;n\ge 1.
\end{equation} 
We also consider 
$$
\S^\infty=\{(s_k)_{k\in\mathbb{N}}\in \R^{\mathbb{N}} | \sum_{k=1}^\infty s_k^2=1\},
$$
which is the unit sphere in the Hilbert sequence space $\ell_2$ of square summable real sequences.
Following \cite{berg-porcu}, we thus define the class ${\cal P}(\S^n,\R)$ as the class of continuous functions $\psi:[0,\pi] \times \R \to \R$ such that $$C\big ((\s_1,t_1),(\s_2,t_2) \big ) = \psi(d_{{\rm GC}}(\s_1,\s_2),t_1-t_2), \qquad (\s_i,t_i) \in \S^n \times \R, \quad i=1,2,  $$
where $d_{{\rm GC}}$ as already been defined as the great circle distance. \cite{berg-porcu} define 
${\cal P}(\S^{\infty},\R)$ as $\bigcap_{n \ge 1} {\cal P}(\S^n,\R)$. The inclusion relation 
$$ {\cal P}(\S^1,\R) \supset {\cal P}(\S^2 ,\R) \supset \cdots \supset {\cal P}(\S^{\infty},\R) $$
is strict and the reader is referred to \cite{berg-porcu} for details. 

We recall the definition of Gegenbauer polynomials $C_k^{(\lambda)}$,
given by the generating function \citep[see][]{dai-xu}
\begin{equation}\label{eq:Geg}
(1-2xz +r^2)^{-\lambda}=\sum_{k=0}^\infty C_k^{(\lambda)}(z) r^k,\quad |r|<1, z\in \mathbb C.
\end{equation}
Here, $\lambda>0$. We have the classical orthogonality relation:
\begin{equation}\label{eq:orth}
\int_{-1}^1 (1-x^2)^{\lambda-1/2}C_k^{(\lambda)}(x)C_h^{(\lambda)}(x)\,{\rm d}x=
\frac{\pi \Gamma(k+2\lambda) 2^{1-2\lambda}}{\Gamma^2(\lambda)(k+\lambda) k!}\delta_{h=k},
\end{equation} with $\Gamma$ denoting the Gamma function.  \\
It is of fundamental importance that $
|C_k^{(\lambda)}(x)|\le C_k^{(\lambda)}(1)$, $x\in[-1,1]$. 
The special value
$\lambda=(n-1)/2$ is relevant for the sphere $\mathbb S^n$ because of the relation to spherical harmonics, which is illustrated in \cite{berg-porcu} as follows. A spherical harmonic of degree $k$ for $\mathbb S^n$  is  the restriction to $\mathbb S^n$ of a real-valued harmonic homogeneous polynomial in $\mathbb R^{n+1}$ of degree $k$. Together with the zero function, the spherical harmonics of degree $k$  form a finite dimensional vector space denoted $\mathcal H_k(n)$. It is a subspace of 
the space  ${\mathcal C}(\mathbb S^n)$ of continuous functions on $\mathbb S^n$.  We have
\begin{equation}\label{eq:dim}
N_k(n):=\dim \mathcal H_k(n)=\frac{(n)_{k-1}}{k!}(2n+n-1),\;k\ge 1,\quad N_0(n)=1,
\end{equation} 
\citep[see][p.3]{dai-xu}.  Here, $(n)_{k-1}$ denotes the Pochammer symbol. \\
The surface measure of the sphere is denoted $\omega_n$, and it has total mass
\begin{equation}\label{eq:mass} 
||\omega_n||=\frac{2\pi^{(n+1)/2}}{\Gamma((n+1)/2)}.
\end{equation}
The spaces $\mathcal H_k(n)$ are mutual orthogonal subspaces of the Hilbert space $L^2(\mathbb S^n,\omega_n)$, which they generate. This means that any $F\in L^2(\mathbb S^n,\omega_n)$ has an orthogonal expansion
\begin{equation}\label{eq:exp}
F=\sum_{k=0}^\infty S_k,\, S_k\in\mathcal H_k(n),\quad ||F||_2^2=\sum_{k=0}^\infty ||S_k||_2^2,
\end{equation}
where the first series converges in $L^2(\mathbb S^n,\omega_n)$, and the second series is  Parseval's equation \citep{berg-porcu}. Here $S_k$ is the orthogonal projection of $F$ onto $\mathcal H_k(n)$  given as
\begin{equation}\label{eq:proj}
S_k(\xi)=\frac{N_k(n)}{||\omega_n||}\int_{\mathbb S^n}c_k(n, \langle \s,  \eta \rangle )F(\eta)\,{\rm d}\omega_n(\eta).
\end{equation}
See the addition theorem for spherical harmonics, \citep{schoenberg}.
For $\lambda= (n-1)/2$, $c_k(n,x)$ is defined
as  the normalized Gegenbauer polynomial 
\begin{equation}\label{eq:nor}
c_k(n,x)=C_k^{((n-1)/2)}(x)/C_k^{((n-1)/2)}(1)=\frac{k!}{(n-1)_k}C_k^{((n-1)/2)}(x),
\end{equation}      
being 1 for $x=1$.\\
Specializing the orthogonality relation (\ref{eq:orth}) to $\lambda=(n-1)/2$ and using Equations (\ref{eq:dim}), (\ref{eq:mass}), \cite{berg-porcu} get for $n\in\mathbb{N}$
\begin{equation}\label{eq:orthspec}
\int_{-1}^1 (1-x^2)^{n/2-1}c_k(n,x)c_h(n,x)\,{\rm d}x=
\frac{||\omega_n||}{||\omega_{n-1}||N_k(n)}\delta_{h = k}.
\end{equation}
The following result characterizes completely the class ${\cal P}(\S^n ,\R)$.
\begin{thm}{\rm \citep{berg-porcu}.} \label{thm:main} Let $n\in\mathbb{N}$  and let $\psi:[0,\pi]\times \R \to \mathbb C$ be a continuous function. Then
$\psi$ belongs to the class ${\cal P}(\S^n,\R)$ if and only if there exists
 a sequence $\{ \varphi_{k,n} \}_{k=0}^{\infty}$ of positive definite functions on $\R$, with $\sum \varphi_{k,n}(0)<\infty$, such that
such that
\begin{equation}\label{eq:expand} \psi
(d_{{\rm GC}},u)=\sum_{k=0}^\infty \varphi_{k,n}(u) c_k(n,\cos d_{{\rm GC}}),
\end{equation}
and the above expansion is uniformly convergent for $(d_{{\rm GC}},u)\in [0,\pi]\times \R$.
We have
\begin{equation}\label{eq:coef}
\varphi_{k,n}(u)=\frac{N_k(n)||\omega_{n-1}||}{||\omega_n||}\int_{0}^{\pi} \psi(x,u)c_k(n, x) \sin x^{n-1}  \,{\rm d}x.
\end{equation}
\end{thm}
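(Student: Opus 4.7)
The plan is to establish both implications by combining the classical Schoenberg characterization of geodesically isotropic covariances on $\S^n$, the Funk–Hecke formula (equivalently, the addition theorem for spherical harmonics), and a Weierstrass $M$-test argument for uniform convergence. For sufficiency, I would take $\psi$ of the form (\ref{eq:expand}) and verify positive definiteness of the induced kernel $C$ on $\S^n\times\R$ directly. By Bochner's theorem each $\varphi_{k,n}$ is the Fourier transform of a finite non-negative measure $\mu_{k,n}$ on $\R$, and the addition theorem gives
$$ c_k(n,\langle\s_i,\s_j\rangle) \;=\; \frac{\|\omega_n\|}{N_k(n)\|\omega_{n-1}\|}\sum_{\ell=1}^{N_k(n)} Y_{k,\ell}(\s_i)\,\overline{Y_{k,\ell}(\s_j)}, $$
so that the quadratic form $\sum_{i,j}c_i\bar c_j\,\psi(d_{{\rm GC}}(\s_i,\s_j),t_i-t_j)$ decomposes termwise into a sum of squared moduli of the form $\int_{\R}\big|\sum_i c_i Y_{k,\ell}(\s_i)e^{\imath\xi t_i}\big|^2\,{\rm d}\mu_{k,n}(\xi)$; continuity of $\psi$ is immediate from the uniform convergence hypothesis.

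For necessity, I define $\varphi_{k,n}$ by the integral formula (\ref{eq:coef}) and first show it is continuous in $u$ by dominated convergence, using continuity of $\psi$ and the bound $|c_k(n,\cos x)|\le 1$. The crucial step is verifying that each $\varphi_{k,n}$ is positive definite on $\R$. For this I fix a spherical harmonic $Y_{k,\ell}\in\mathcal H_k(n)$ and, for arbitrary $\{t_i\}\subset\R$ and $\{c_i\}\subset\mathbb C$, consider Riemann-sum approximations of the ``smeared'' weighted configuration with weights $c_i Y_{k,\ell}(\s)\,{\rm d}\omega_n(\s)$. Positive definiteness of $\psi$ on $\S^n\times\R$ then yields, in the limit,
$$ \sum_{i,j}c_i\bar c_j\iint_{\S^n\times\S^n}\psi(d_{{\rm GC}}(\s_1,\s_2),t_i-t_j)\,Y_{k,\ell}(\s_1)\overline{Y_{k,\ell}(\s_2)}\,{\rm d}\omega_n(\s_1)\,{\rm d}\omega_n(\s_2)\;\ge\;0. $$
Applying the Funk–Hecke formula to the inner integral (which expresses the integral operator with zonal kernel $\psi(\cdot,u)$ acting on $\overline{Y_{k,\ell}}$ as multiplication by a constant proportional to $\varphi_{k,n}(u)$) reduces the left-hand side to a positive multiple of $\sum_{i,j}c_i\bar c_j\varphi_{k,n}(t_i-t_j)$, giving the desired positive definiteness.

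For the summability $\sum_k\varphi_{k,n}(0)<\infty$ I specialize to $u=0$: then $\psi(\cdot,0)$ is a continuous, geodesically isotropic positive definite function on $\S^n$, and Schoenberg's theorem together with the orthogonality relation (\ref{eq:orthspec}) identifies its Gegenbauer coefficients with $\varphi_{k,n}(0)$, forcing them to be non-negative with $\sum_k\varphi_{k,n}(0)=\psi(0,0)<\infty$. Uniform convergence of the series (\ref{eq:expand}) now follows from the Weierstrass $M$-test: positive definiteness of $\varphi_{k,n}$ gives $|\varphi_{k,n}(u)|\le\varphi_{k,n}(0)$ for all $u\in\R$, and $|c_k(n,\cos d_{{\rm GC}})|\le c_k(n,1)=1$, so the series is dominated by the convergent series $\sum_k\varphi_{k,n}(0)$. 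Finally, identifying the sum with $\psi$ is a standard orthogonality/density argument: by (\ref{eq:orthspec}) the partial sums agree with $\psi(\cdot,u)$ in the $L^2(\sin^{n-1}x\,{\rm d}x)$ projection onto $\bigoplus_{k\le K}\mathcal H_k(n)$, and completeness of spherical harmonics together with continuity of both sides gives pointwise identity.

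The principal obstacle is the positive definiteness of $\varphi_{k,n}$: the positive definiteness hypothesis on $\psi$ is stated only for finite configurations on $\S^n\times\R$, so transferring it to a ``continuous'' configuration weighted by a spherical harmonic requires a careful discretization/limit argument (together with uniform continuity of $\psi$ on compact sets to pass to the limit in the quadratic form). All other steps—Funk–Hecke, Schoenberg at $u=0$, and the Weierstrass bound—are then essentially mechanical.
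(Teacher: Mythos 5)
The paper does not actually prove this statement: Theorem \ref{thm:main} is imported verbatim from \cite{berg-porcu} as background material, so there is no in-paper proof to compare against. Judged on its own terms, your proposal is essentially correct and follows the same general strategy as the original Berg--Porcu argument (extract the coefficient functions by \eqref{eq:coef}, show they are positive definite, get summability from Schoenberg's theorem at $u=0$, and conclude uniform convergence by the Weierstrass $M$-test using $|\varphi_{k,n}(u)|\le\varphi_{k,n}(0)$ and $|c_k(n,\cdot)|\le 1$). The one place where you take a more laborious route than necessary is the key step you yourself flag as the principal obstacle: positive definiteness of $\varphi_{k,n}$. Rather than smearing a finite configuration against a spherical harmonic and invoking Funk--Hecke (which forces you to carry out a discretization/limit argument by hand), you can fix $t_1,\dots,t_m\in\R$ and $c_1,\dots,c_m$ and observe that
\begin{equation*}
(\s_1,\s_2)\;\longmapsto\;\sum_{i,j}c_i\bar c_j\,\psi\bigl(d_{{\rm GC}}(\s_1,\s_2),t_i-t_j\bigr)
\end{equation*}
is a continuous, zonal, positive definite kernel on $\S^n$; Schoenberg's classical theorem then says its $k$-th Gegenbauer coefficient is non-negative, and by linearity of the functional \eqref{eq:coef} that coefficient equals $\sum_{i,j}c_i\bar c_j\,\varphi_{k,n}(t_i-t_j)$, which is exactly the positive definiteness you need. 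This pushes all the measure-theoretic work into the already-established spatial Schoenberg theorem and makes the step one line. Everything else in your write-up --- the sufficiency direction via the addition theorem and the Schur/Bochner decomposition into squared moduli, the identification $\varphi_{k,n}(0)=b_k$ with $\sum_k b_k=\psi(0,0)<\infty$, and the final identification of the sum with $\psi$ by orthogonality, completeness, and continuity --- is sound. Two cosmetic points: in the sufficiency direction you do not need Bochner at all (the Schur product theorem applied to $\varphi_{k,n}(t_i-t_j)$ and $c_k(n,\langle\s_i,\s_j\rangle)$ suffices, and avoids having to argue that the $\varphi_{k,n}$ are continuous before the expansion is established), and the bound $|\varphi_{k,n}(u)|\le\varphi_{k,n}(0)$ is a purely algebraic consequence of $2\times 2$ positive definiteness, so it needs no continuity either.
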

We also report the characterization of the class ${\cal P}(\S^{\infty},\R)$ obtained by the same authors. 

\begin{thm}\citep{berg-porcu} \label{thm:main2} Let 
$\psi:[0,\pi]\times \R \to \R$ be a continuous function. Then
$\psi$ belongs to ${\cal P}(\S^\infty,\R)$ if and only if there exists
 a sequence $\{ \varphi_{k} \}_{k=0}^{\infty }$ of positive definite functions on $\R$, with $\sum_k \varphi_{k}(0)<\infty$ 
such that
\begin{equation}\label{eq:expandp}
\psi(d_{{\rm GC}},u)=\sum_{k=0}^\infty \varphi_{k}(u) \cos^k d_{{\rm GC}},
\end{equation}
and the above expansion is uniformly convergent for $(d_{{\rm GC}},u)\in [0,\pi]\times G$.
\end{thm}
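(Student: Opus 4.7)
The plan is to use a classical periodization argument transferring positive definiteness from $\R$ to the circle $\S^1 = \R/2\pi\Z$. The compact support hypothesis is the key feature that forces the periodization of $\varphi$ to collapse to a single translate, which then coincides with $\psi(d_{{\rm GC}}, u)$ pointwise. Continuity and temporal symmetry of $\psi$ are inherited directly from $\varphi$, so the substantive claim to establish is positive definiteness on $\S^1 \times \R$.

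\textbf{Step 1 (periodization identity).} I would first show that, for any $\theta_1, \theta_2 \in [0, 2\pi)$ representing points $\s_1, \s_2 \in \S^1$ and any $u \in \R$,
\begin{equation*}
  \sum_{k \in \Z} \varphi\bigl(\theta_1 - \theta_2 + 2\pi k,\; u\bigr) \;=\; \psi\bigl(d_{{\rm GC}}(\s_1, \s_2),\; u\bigr).
\end{equation*}
Because $\varphi(\cdot, u)$ is supported in $(-\pi,\pi)$ while consecutive translates are spaced by $2\pi$, at most one summand is nonzero. A short case split on $\Delta := \theta_1 - \theta_2 \in (-2\pi, 2\pi)$, together with symmetry of $\varphi$ in the first argument, shows the surviving term equals $\psi(\min(|\Delta|, 2\pi - |\Delta|), u) = \psi(d_{{\rm GC}}, u)$. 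The boundary $|\Delta| = \pi$ is trivially compatible since both sides vanish.

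\textbf{Step 2 (Fej\'er-type averaging).} Fix points $\s_1, \ldots, \s_N \in \S^1$ with angular coordinates $\theta_i \in [0, 2\pi)$, times $t_i \in \R$, and coefficients $c_i \in \R$. For each $M \in \mathbb{N}$, I would apply positive definiteness of $\varphi$ on $\R \times \R$ to the enlarged configuration consisting of the $(2M+1)N$ points $(\theta_i + 2\pi k, t_i)$ with $k = -M, \ldots, M$, each carrying coefficient $c_i$. This yields
\begin{equation*}
  S_M \;:=\; \sum_{k, k' = -M}^{M} \sum_{i, j = 1}^{N} c_i c_j\, \varphi\bigl(\theta_i - \theta_j + 2\pi(k - k'),\; t_i - t_j\bigr) \;\ge\; 0.
\end{equation*}
Reindexing by $l = k - k'$ and counting the $(2M+1-|l|)$ admissible pairs gives
\begin{equation*}
  \frac{S_M}{2M+1} \;=\; \sum_{|l| \le 2M} \left(1 - \frac{|l|}{2M+1}\right) A_l, \qquad A_l := \sum_{i,j} c_i c_j\, \varphi(\theta_i - \theta_j + 2\pi l, t_i - t_j).
\end{equation*}
By the support condition, each pair $(i,j)$ contributes to $A_l$ for at most one integer $l$, so $\sum_{l \in \Z}|A_l| < \infty$ trivially. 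Dominated convergence on the counting measure then lets me pass $M \to \infty$, giving $S_M/(2M+1) \to \sum_{l \in \Z} A_l$.

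\textbf{Step 3 (conclusion).} Exchanging the order of summation and invoking Step 1 pair by pair,
\begin{equation*}
  \lim_{M \to \infty} \frac{S_M}{2M+1} \;=\; \sum_{i,j} c_i c_j\, \psi\bigl(d_{{\rm GC}}(\s_i, \s_j),\; t_i - t_j\bigr).
\end{equation*}
Nonnegativity of each $S_M/(2M+1)$ forces the limit to be nonnegative, which is precisely positive definiteness of $C((\s_1,t_1),(\s_2,t_2)) := \psi(d_{{\rm GC}}(\s_1,\s_2), t_1 - t_2)$ on $\S^1 \times \R$.

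The main obstacle is conceptual rather than computational: one has to notice that the compact support of $\varphi$ forces the periodized sum in Step 1 to consist of a single surviving translate aligned with the great-circle distance, turning the result into a Bochner/Fej\'er style periodization argument. Once this identity is in hand, the averaging and the dominated convergence passage are routine. This also clarifies why the statement is restricted to $\S^1$: on higher-dimensional spheres the great-circle distance is not produced by a one-coordinate periodization of a Euclidean covariance, which is exactly the open problem flagged in the remark immediately following the theorem.
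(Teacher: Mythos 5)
Your argument does not address the statement you were asked to prove. Theorem \ref{thm:main2} is the Berg--Porcu characterization of the class ${\cal P}(\S^\infty,\R)$: an if-and-only-if statement saying that a continuous $\psi$ is positive definite on \emph{every} sphere $\S^n$ cross time simultaneously precisely when it admits the expansion $\psi(d_{{\rm GC}},u)=\sum_{k}\varphi_k(u)\cos^k d_{{\rm GC}}$ with each $\varphi_k$ positive definite on $\R$ and $\sum_k\varphi_k(0)<\infty$. There is no compact support hypothesis anywhere in that statement, no restriction of a Euclidean covariance, and the sphere involved is the Hilbert sphere $\S^\infty=\bigcap_{n\ge1}$ setting, not $\S^1$. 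What you have written instead is a proof of Theorem \ref{g_wendland2} (the result that the restriction to $[0,\pi]$ of a covariance on $\R\times\R$ that is compactly supported in $(-\pi,\pi)$ in its first argument yields a covariance on $\S^1\times\R$). Your closing remark, explaining ``why the statement is restricted to $\S^1$,'' confirms the mix-up: the theorem under review is not restricted to $\S^1$ at all.

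For what it is worth, your periodization-plus-Fej\'er-averaging argument is a sound and rather elegant alternative route to Theorem \ref{g_wendland2}: the paper instead computes the Fourier coefficients $b_n(u)=\frac{2}{\pi}\int_0^\pi\cos(nz)\,\psi(z,u)\,{\rm d}z$, checks via Lemma 1 of the Gneiting reference that each $b_n$ is positive definite on $\R$ and that $\sum_n b_n(0)<\infty$, and then invokes the $n=1$ case of Theorem \ref{thm:main}; your approach avoids the spectral expansion entirely and works directly with finite quadratic forms. But it proves nothing about Theorem \ref{thm:main2}. A proof of that theorem would need, for the sufficiency direction, the facts that $\cos d_{{\rm GC}}(\s_1,\s_2)=\langle\s_1,\s_2\rangle$ is positive definite on every $\S^n$, that products and uniformly convergent sums of positive definite functions on $\S^n\times\R$ remain positive definite; and, for the necessity direction, an extraction of the coefficient functions $\varphi_k$ from membership in every ${\cal P}(\S^n,\R)$, typically by a Schoenberg-type limiting argument on the Gegenbauer expansions of Theorem \ref{thm:main} as $n\to\infty$. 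None of these ingredients appears in your write-up, so the proof must be redone from scratch for the correct statement.
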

Some comments are in order. Theorems \ref{thm:main} and \ref{thm:main2} are fundamental to create the examples illustrated in Table \ref{table1}. Also, they are crucial to solve some of the open problems that we reported in Section 6. 
Table~\ref{table-examples} lists some permissible models on $\S^2 \times \R$.


\begin{table} 
\caption{{\small Parametric families of geodesically isotropic space-time covariance functions. {PMC} is the acronym for Porcu-Mateu-Christakos \cite{porcu-mateu-christakos}. {PBG} is the acronym for Porcu-Bevilacqua-Genton \cite{Porcu-Bevilacqua-Genton}. We use the abuse of notation $r$ for the great circle distance $d_{{\rm GC}}$.} \label{table-examples} }
\centering
{\small\linespread{1.2}\selectfont
\begin{tabular}{|cc|c|}
\hline 
Family & Expression for $\psi(r,u)$ & Parameter Restrictions \\
\hline 
Fonseca-Steel & $\displaystyle\frac{ 1 + (\gamma_{\bS}(r)+\gamma_{{\cal T}}(u))}{ ( 1+\gamma_{\bS}(r) \big )^{\lambda_1}} \frac{{\cal K}_{\lambda_1} \left ( 2 \sqrt{(a+\gamma_{\bS}(r))^{\delta}} \right )}{{\cal K}_{\lambda_1}(2\sqrt{a \delta})} $ &  $a_1,\delta,\lambda_i >0$, $i=0,1$.\raisebox{0pt}[30pt][18pt]{~} \\
\hline
Clayton PMC & $\bigg ( (1+r^{\alpha})^{\rho_1} +(1+|u|^{\beta})^{\rho_2} -1 \bigg )^{-\rho_3}$ &  $\alpha,\beta,\rho_1,\rho_2 \in (0,1]$; $\rho_3 >0$.\raisebox{0pt}[26pt][15pt]{~} \\
\hline 
Gumbel PMC & $ \exp \bigg ( -\left ( r^{\alpha_1} + u^{\alpha_2} \right )^{\alpha_3} \bigg )$ &  $\alpha_i \in (0,1]$, $i=1,2,3$.\raisebox{0pt}[26pt][15pt]{~} \\
\hline    
PBG & \multicolumn{1}{c}{ Equation (\ref{PBG})} &  \\
\multicolumn{3}{|c|}{ \linespread{2.2}\selectfont \begin{tabular}{c c c} 
\hline
Families for $f$ & Expression & Parameters \\
\hline
Dagum & $f(r)=1 - \left ( \frac{r^{\beta}}{1+r^{\beta}}\right )^{\tau}  $ & $\beta,\tau \in (0,1]$  \\
\hline
Mat{\'e}rn & Equation (\ref{matern}) & $0 < \nu \le 1/2 $ \\
\hline 
Gen. Cauchy & $f(r)=(1+r^{\alpha})^{-\beta/\alpha}$ & $\alpha \in (0,1]$, $\beta>0$ \\ 
\hline
Pow. Exponential & $f(r)=\exp(-r^{\alpha})$ & $\alpha \in (0,1]$ \\
\hline \end{tabular} }  \\[1mm]
\hline
Adaptive Gneiting & \multicolumn{1}{c}{Equation (\ref{PBG2})} & $f $ as in PBG family \\
\multicolumn{3}{|c|}{\linespread{2.2}\selectfont 
 \begin{tabular}{c c c} 
\hline
Families for $g_{[0,\pi]}$ & Expression & Parameters \\
\hline
Dagum & $g_{[0,\pi]}(r)=1 + \left ( \frac{r^{\beta}}{1+r^{\beta}}\right )^{\tau}  $ & $\beta,\tau \in (0,1]$  \\
\hline
Gen. Cauchy & $g_{[0,\pi]}(r)= (1+r^{\alpha})^{\beta/\alpha}$ & $\alpha \in (0,1]$, $\beta \le \alpha $ \\ 
\hline
Power & $g_{[0,\pi]}(r)=c+ r^{\alpha}$ & $\alpha \in (0,1]$, $c>0$ \\
\end{tabular} }   \\
\hline
PBG2 & Equation (\ref{wen-space-time}) &  $h$ positive, decreasing and  \\
&& convex with $\lim_{t \to \infty}h(t)=0$.\\
\hline 
Dynamical Wendland & \multicolumn{1}{c}{ Equation (\ref{wen-space-time3})} &  $h$ positive, decreasing and \\ 
& \multicolumn{1}{c}{} & convex with $\lim_{t \to \infty}h(t)=0$.\\
\multicolumn{3}{|c|}{\linespread{2.2}\selectfont \begin{tabular}{c c c} 
\hline
Families for $\varphi_{\mu,k}$ & Expression & Parameters \\
\hline
Dynamical Wendland 2 & $h(|u|)^{\alpha} \left ( 1- \frac{r}{h(|u|)}\right )_+^6 \left ( 1+ 6 \frac{r}{h(|u|)} \right )$ & $\alpha \ge 4 $ \\
\hline 
Dynamical Wendland 4 & $h(|u|)^{\alpha} \left ( 1- \frac{r}{h(|u|)}\right )_+^8  \left ( 1+ 8 \frac{r}{h(|u|)} + \frac{63}{3} \left (\frac{r}{h(|u|)} \right )^2 \right )$ & $\alpha \ge 4 $ \\
\end{tabular}}\\ 
\hline
\end{tabular}}
\end{table}

\subsection*{Appendix C. Mathematical Proofs} 

\subsubsection*{C1. The Quasi Arithmetic Class on Spheres} 
For ease of exposition, we slightly deviate from the notation of the paper and denote $f \circ g$ the composition of $f$ to $g$. Let us recall the expression of quasi arithmetic covariances as in Equation (\ref{qarf}):
$$ \psi(d_{{\rm GC}},u) = f \left ( \frac{1}{2} f^{-1} \circ \psi_{\bS}(d_{{\rm GC}}) +  \frac{1}{2} f^{-1} \circ C_{{\cal T}}(u) \right ), \qquad (d_{{\rm GC}},u) \in [0,\pi] \times \R. $$ 
By Bernstein's theorem \cite[p. 439]{Feller}, a function $f:[0,\infty) \to \R$ is completely monotonic if and only if 
\begin{equation} \label{bernstein}
f(t)= \int_{[0,\infty)} {\rm e}^{- \xi t} H({\rm d} \xi),  \qquad t \ge 0,
\end{equation}
where $H$ is a positive and bounded measure. A Bernstein function is a continuous mapping on the positive real line, having a first derivative being completely monotonic. We are now able to give a formal assertion for the validity of the quasi arithmetic construction. 

\begin{thm} \label{THM:QARF} 
Let $f:[0,\infty) \to \R_+$ be a completely monotonic function. Let $f_{1}:[0,\infty) \to \R$ be a continuous functions such that $f^{-1} \circ f_1$ is a Bernstein function. Let $C_{{\cal T}}: \R \to \R$ be a continuous, symmetric covariance function such $f^{-1} \circ C_{{\cal T}}$ is a temporal variogram. Call $\psi_{\bS}= f_{1,[0,\pi]}$ the restriction of $f_1$ to the interval $[0,\pi]$. Then,
\begin{equation}
\label{qarf2} \psi(d_{{\rm GC}},u) = {\cal Q}_{f} \left ( \psi_{\bS}(d_{{\rm GC}}), C_{{\cal T}}(u) \right ), \qquad (d_{{\rm GC}},u) \in [0,\pi] \times \R, 
\end{equation}
is a geodesically isotropic space-time covariance function on $\S^n \times \R$, for all $n=1,2,3,\ldots$. 
\end{thm}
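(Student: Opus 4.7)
The plan is to adapt the Porcu--Mateu--Christakos template to the spherical setting via a two--step Bernstein representation. First I would apply the Bernstein representation (\ref{bernstein}) to the completely monotonic function $f$, writing $f(t) = \int_{[0,\infty)} e^{-\xi t}\,H({\rm d}\xi)$ for a finite positive measure $H$. Setting $\phi := f^{-1}\circ f_1$ (Bernstein by hypothesis) and $\gamma := f^{-1}\circ C_{{\cal T}}$ (a temporal variogram by hypothesis), and using the multiplicative property of the exponential, the quasi--arithmetic expression (\ref{qarf2}) factorises as
\begin{equation*}
\psi(d_{\rm GC},u) \; = \; \int_{[0,\infty)} e^{-\xi \phi(d_{\rm GC})/2}\, e^{-\xi \gamma(u)/2}\,H({\rm d}\xi),\qquad (d_{\rm GC},u)\in[0,\pi]\times \R.
\end{equation*}

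For each fixed $\xi\ge 0$ the temporal factor $u\mapsto e^{-\xi\gamma(u)/2}$ is a symmetric covariance on $\R$ by the classical Schoenberg correspondence between variograms and conditionally negative definite kernels on the real line. For the spatial factor I would exploit the fact that $t\mapsto e^{-\xi t/2}$ is completely monotonic and $\phi$ is Bernstein, so that their composition is again completely monotonic on $[0,\infty)$. A second application of Bernstein then yields a positive measure $K_\xi$ such that
\begin{equation*}
e^{-\xi \phi(t)/2} \; = \; \int_{[0,\infty)} e^{-\eta t}\,K_\xi({\rm d}\eta),\qquad t\ge 0.
\end{equation*}
Evaluating at $t=d_{\rm GC}$ exhibits the spatial factor as a positive mixture of exponential kernels $d_{\rm GC}\mapsto e^{-\eta d_{\rm GC}}$, each of which (being the Mat{\'e}rn covariance with $\nu=1/2$, cf.\ Table~\ref{wendland2}) is a geodesically isotropic covariance on $\S^n$ for every finite $n$. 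The mixture therefore inherits the same property, so $d_{\rm GC}\mapsto e^{-\xi\phi(d_{\rm GC})/2}\in {\cal P}(\S^n)$ for every $\xi\ge 0$ and every $n$.

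To close, for each $\xi\ge 0$ the integrand in the first display is the tensor product of a covariance on $\S^n$ with a covariance on $\R$, hence a covariance on $\S^n\times \R$; integration against the positive measure $H$ preserves positive definiteness, so $\psi\in {\cal P}(\S^n,\R)$. Symmetry in $u$ is inherited from that of $\gamma$, and geodesic isotropy is manifest from the construction. The main obstacle, in my view, is the spatial step: one has to convert the purely real--line Bernstein hypothesis on $\phi$ into positive definiteness of the kernel $e^{-\xi\phi(d_{\rm GC})/2}$ on the sphere. The classical fact that $e^{-\eta d_{\rm GC}}$ is permissible on every finite--dimensional $\S^n$ is precisely what makes this transfer possible; without this bridge the Bernstein hypothesis would not suffice, and the argument cannot be pushed to $\S^\infty$ since $e^{-\eta d_{\rm GC}}$ admits no non--negative expansion in powers of $\cos d_{\rm GC}$ in the sense of Theorem \ref{thm:main2}.
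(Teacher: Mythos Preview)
Your argument is correct and follows essentially the same route as the paper: apply Bernstein's representation to $f$, factor the integrand into a spatial and a temporal piece, handle the temporal piece via Schoenberg's variogram correspondence, and verify that the spatial piece is a valid geodesically isotropic covariance on every $\S^n$. The only cosmetic difference is in this last step: the paper observes that $t\mapsto e^{-\xi\phi(t)}$ is completely monotonic (CM composed with Bernstein) and then invokes Gneiting's Theorem~7 directly, whereas you unpack that black box via a second Bernstein representation and the permissibility of the exponential kernel---which is precisely how Gneiting's result is proved. Your closing remark that the argument does not reach $\S^\infty$ is a nice observation not made explicit in the paper.
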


\begin{proof}
Denote $g$ the composition $f^{-1} \circ f_1$ and call $g_{[0,\pi]}$ the restriction of $g$ to $[0,\pi]$, obtained through 
$g_{[0,\pi]}= f^{-1} \circ f_{1,[0,\pi]}$. By assumption, $g$ is a Bernstein function. Thus, arguments in \cite{porcu-schilling}, with the references therein, show that the function $h$, defined through
$$ h(t;\xi) = \exp(-\xi g(t) ), \qquad t, \xi \ge 0, $$
is a completely monotonic fuction for any positive $\xi$. Thus, $$h_{[0,\pi]}(d_{{\rm GC}};\xi) = \exp \left ( - \xi f^{-1} \circ f_{1,[0,\pi]} (d_{{\rm GC}})\right ), \qquad d_{{\rm GC}} \in [0,\pi],$$ is the restriction of a completely monotonic function to the interval $[0,\pi]$. Invoking Theorem~7 in \cite{gneiting2}, we obtain that $h_{[0,\pi]}$ is a geodesically isotropic covariance function on any $n$-dimensional sphere. Additionally, since $f^{-1} \circ C_{{\cal T}}$ is a temporal variogram, by Schoenberg's theorem \citep{schoenberg2} we deduce that $k(u;\xi)=  \exp(- \xi f^{-1} \circ C_{{\cal T}}(u)) $, $u \in \R$, is a covariance function on the real line for every positive $\xi$. Thus, the scale mixture covariance
\begin{eqnarray*} 
 \psi(d_{{\rm GC}},u) &=& \int_{[0,\infty)} h_{[0,\pi]}(d_{{\rm GC}};\xi) k(u;\xi) H({\rm d} \xi) \\
 &=& \int_{[0,\infty)}  \exp \left ( - \xi g_{[0,\pi]} (d_{{\rm GC}}) - \xi f^{-1} \circ C_{{\cal T}}(u) \right ) H({\rm d} \xi)  \\
 &=& {\cal Q}_f(\psi_{\bS} (d_{{\rm GC}}),C_{{\cal T}}(u)), \qquad (d_{{\rm GC}},u) \in [0,\pi] \times \R, 
 \end{eqnarray*}
is a covariance function on $\S^n \times \R$ for all $n \in \mathbb{N}$. 
\end{proof}

\subsubsection*{C2. Proofs for Section \ref{compact}}

\begin{thm} \label{THM:ASKEY}
Let $h$ be a positive, decreasing and convex function on the positive real line, with $h(0)=c$, $0<c\le \pi$, and $\lim_{t \to \infty} h(t)=0$. Let $\alpha \ge 3 $ and $\mu \ge 4 $. Then, Equation (\ref{wen-space-time}) defines a geodesically isotropic and temporally symmetric covariance function on $\S^3 \times \R$. 
\end{thm}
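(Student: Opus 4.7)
The plan is to apply the Berg--Porcu characterization (Theorem~\ref{thm:main}) with $n=3$: it suffices to verify that the Gegenbauer coefficients $\{\varphi_{k,3}(u)\}_{k \ge 0}$ in the expansion
\[
\psi(d_{\rm GC}, u) = \sum_{k=0}^{\infty} \varphi_{k,3}(u)\, c_k(3, \cos d_{\rm GC})
\]
are each positive definite functions of the temporal lag $u$, with $\sum_k \varphi_{k,3}(0) < \infty$. The Askey factor in $\psi$ forces $x \le h(|u|) \le c \le \pi$, which makes the change of variable $y = x/h(|u|)$ legitimate inside the integral~(\ref{eq:coef}) defining $\varphi_{k,3}$.

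Performing this change of variable gives
\[
\varphi_{k,3}(u) = A_k\, h(|u|)^{\alpha+1} \int_0^1 (1-y)^{\mu}\, c_k\!\bigl(3,\cos(h(|u|)y)\bigr) \sin^{2}\!\bigl(h(|u|)y\bigr)\, dy,
\]
where $A_k > 0$ absorbs the constants from (\ref{eq:coef}) and the prefactor $\sigma^{2}/c^{\alpha}$. Expanding the trigonometric factor in its Taylor series and integrating termwise against $(1-y)^{\mu}$ using beta integrals yields a closed-form representation of $\varphi_{k,3}(u)$ as $h(|u|)^{\alpha+3}$ multiplied by a generalized hypergeometric function $\mathstrut_1F_{2}\bigl(1; \tfrac{\mu+2}{2}+k, \tfrac{\mu+3}{2}+k; -k^{2}h(|u|)^{2}/4\bigr)$, of exactly the form highlighted in the research problem on compact supports with differentiable temporal margins.

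Positive definiteness of each $\varphi_{k,3}$ on $\mathbb{R}$ then rests on two ingredients. First, by P\'olya's criterion, $h(|u|)/c$ is a characteristic function on $\mathbb{R}$; the convexity of $h$ moreover yields the non-negative-measure representation $h(t) = \int_0^{\infty} (s-t)_+\, \sigma(ds)$, and standard scale-mixture arguments then give positive definiteness of $h(|u|)^{\alpha+3}$ for every $\alpha \ge 3$. Second, the hypergeometric factor admits a Poisson-type integral representation
\[
\mathstrut_1F_{2}\!\Bigl(1; \tfrac{\mu+2}{2}+k, \tfrac{\mu+3}{2}+k; -\tfrac{k^{2}h(|u|)^{2}}{4}\Bigr) = \int_0^1 \cos\!\bigl(k s\, h(|u|)\bigr)\, W_{\mu,k}(s)\, ds,
\]
valid for $\mu \ge 4$ with a non-negative weight $W_{\mu,k}$, expressing this factor as a positive mixture in $s \in [0,1]$ of cosines $\cos(k s\, h(|u|))$; each such cosine is positive definite in $u$ by P\'olya's criterion applied to the convex decreasing function $s\, h(|u|)$. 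Schur's product theorem then closes the argument, while $\sum_{k} \varphi_{k,3}(0) < \infty$ follows from $|c_k(3,\cdot)| \le 1$, integrability of $(1-y)^{\mu}$, and polynomial decay of the Gegenbauer coefficients of the compactly supported Askey function.

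The main obstacle will be rigorously establishing the Poisson-type representation of the $\mathstrut_1F_{2}$ factor with a non-negative weight $W_{\mu,k}$, and verifying that the thresholds $\alpha \ge 3$ and $\mu \ge 4$ are precisely the ones required. These bounds parallel the classical Askey lower bound $\mu \ge (d+1)/2$ in $\mathbb{R}^d$ with $d = 2n+1 = 7$ (the Schoenberg-type embedding dimension associated with $\mathbb{S}^3$), and should emerge naturally from the combined positivity requirement on $W_{\mu,k}$ and the convergence requirement on the Gegenbauer series~(\ref{eq:expand}).
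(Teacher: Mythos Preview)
Your route is genuinely different from the paper's, and it contains a real gap.

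The paper does \emph{not} compute Gegenbauer coefficients at all. It uses a scale-mixture identity: for suitable $n\ge 2$, $\gamma\ge 1$ with $\mu=n+\gamma+1$ and $\alpha=n+1$,
\[
h(|u|)^{\alpha}\Bigl(1-\tfrac{d_{\rm GC}}{h(|u|)}\Bigr)_+^{\mu}
=\mathrm{const}\int_0^{\infty}\Bigl(1-\tfrac{d_{\rm GC}}{\xi}\Bigr)_+^{n}\,\xi^{n}\Bigl(1-\tfrac{\xi}{h(|u|)}\Bigr)_+^{\gamma}\,{\rm d}\xi,
\]
so that $\psi$ is a positive mixture of separable kernels $\psi_{\bS}(d_{\rm GC};\xi)\,C_{\cal T}(u;\xi)$. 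The spatial factor $(1-d_{\rm GC}/\xi)_+^{n}$ is geodesically isotropic on $\S^3$ for $n\ge 2$ by Gneiting's lemmas, and the temporal factor is positive definite on $\R$ for each $\xi$ under the stated hypotheses on $h$. Positive definiteness of the mixture is then immediate. The thresholds $\alpha\ge 3$, $\mu\ge 4$ fall out of $n\ge 2$, $\gamma\ge 1$.

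Your direct Berg--Porcu computation founders at the step where you claim that each $\cos\!\bigl(ks\,h(|u|)\bigr)$ is positive definite in $u$ ``by P\'olya's criterion applied to the convex decreasing function $s\,h(|u|)$''. P\'olya's criterion asserts that a continuous, even, convex, vanishing-at-infinity function is itself a characteristic function; it says nothing about the cosine of such a function. In fact the claim is false: take $h(t)=(1-t)_+$, $s=k=1$. Then $\cos\!\bigl(h(|u|)\bigr)$ equals $\cos(1)\approx 0.54$ at $u=0$ and equals $1$ for all $|u|\ge 1$, so it does not attain its maximum at the origin and cannot be positive definite. Since your Poisson-type representation expresses the ${}_1F_2$ factor as a nonnegative mixture of exactly these cosines, the positivity of $W_{\mu,k}$ (even if established) would not deliver positive definiteness of $\varphi_{k,3}$. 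There is also a notational slip: the ${}_1F_2$ expression you quote from the paper has the Wendland smoothness index in the parameter slots and the Fourier/Gegenbauer mode in the argument; in your formula the same symbol $k$ is doing both jobs.

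The scale-mixture argument bypasses all of this: it never needs to analyze $\varphi_{k,3}(u)$ mode by mode, and the required positive-definiteness checks reduce to known facts about Askey functions on spheres and one-dimensional P\'olya-type temporal kernels.
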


\begin{proof}
The proof is based on scale mixture arguments in concert with the calculations in \cite{Porcu-Bevilacqua-Genton-2}. In particular, we have that the function in Equation (\ref{wen-space-time}) is the result of the scale mixture of the function $\psi_{\bS}(d_{{\rm GC}};\xi)= (1-d_{{\rm GC}}/\xi)_+^{n}$ with the function $C_{{\cal T} }(u;\xi) = \xi^{n} \left ( 1- \frac{\xi}{h(|u|)} \right )_+^{\gamma}$, $\xi > 0, t \ge 0$,  $\gamma \ge 1, n \ge 2$. In particular, arguments in Lemmas 3 and 4 in \cite{gneiting2} show that $\psi_{\bS}$ is a covariance function on $\S^3$ for every positive $\xi$. Under the required conditions on the function $h$, we have that $C_{{\cal T}}$ is a covariance function on $\R$ for every positive $\xi$. Thus, the scale mixture arguments in \cite{Porcu-Bevilacqua-Genton-2}, with $\mu=n+\gamma+1$ and $\alpha=n+1$ can now be applied, obtaining the result.  
\end{proof}

A more sophisticated argument is required to show that the structure in Equation (\ref{wen-space-time3}) is positive definite on the circle cross time. We start with the proof of Theorem \ref{g_wendland2} because its arguments will be partially used to prove Theorem \ref{gen_w}. 

\begin{proof}[Proof of Theorem \ref{g_wendland2}]
Denote $\varphi_{[0,\pi]}$ the restriction of $\varphi$ to the interval $[0,\pi]$ with respect to the first argument. Let $n \in \mathbb{N}$. Consider the sequence of functions $b_{n}(\cdot)$, defined through 
\begin{eqnarray} \label{fourier}
b_{n}(u) &=&  \frac{2}{\pi} \int_{0}^{\pi} \cos(n z) \psi(z,u) {\rm d } z= \frac{2}{\pi} \int_{0}^{\pi} \cos(n z) \varphi_{{[0,\pi]}}(z,u) {\rm d } z  \nonumber \\
&=& \frac{1}{\pi} \int_{-\infty}^{\infty} \cos(n x) \varphi(x,u) {\rm d }x .
\end{eqnarray}
Since $\varphi$ is positive definite on $\R \times \R$, arguments in Lemma 1 in \cite{gneiting1} show that $b_{n}(u)$ is a covariance function on $\R$ for all $n \in \mathbb{N}$. Additionally, we have 
$$ \sum_{n=0}^{\infty} b_{n}(0) = \sum_{n=0}^{\infty} \frac{1}{\pi} \int_{-\infty}^{\infty} \cos(n x) \varphi(x,0) {\rm d} x=  \sum_{n=0}^{\infty} \widehat{\varphi}_{\mathbb{N}}(n) < \infty , $$ where $\widehat{\varphi}_{\mathbb{N}}$ denotes the Fourier transform of $\varphi(x,0)$ restricted to natural numbers. Thus, we get that $\sum_n b_{n}(0)< \infty$ because the Fourier transform of a positive definite function is nonnegative and integrable. We can thus invoke Theorem 3.3 in \cite{berg-porcu} to obtain that $\psi(d_{{\rm GC}},u)=\varphi_{[0,\pi]}(d_{{\rm GC}},u)$ is a covariance function on the circle $\S^1$ cross time. 
\end{proof}

\begin{proof}[Proof of Theorem \ref{gen_w}]
We give a proof of the constructive type. Let $k \in \mathbb{N}$. Arguments in Theorem 1 of \cite{Porcu-Bevilacqua-Genton-2} show that the function
\begin{equation} \label{wen-space-time4}
C(x,u)= \sigma^2 h(|u|)^{\alpha} \varphi_{\mu,k} \left ( \frac{|x|}{ h(|u|)} \right ), \qquad (x,u) \in \R \times \R, 
\end{equation}
is positive definite on $\R \times \R$ provided $\alpha \ge 2k +3$ and $\mu \ge k+4$. Arguments in \cite{Porcu-Zastavnyi-Bevilacqua} show that 
\begin{eqnarray*}
 b_{n}(u) &=& \int_{_\infty}^{-\infty} \cos (n x) C(x,u) {\rm d} x \\ &\propto & \sigma^2  h(|u|)^{\alpha+d} \mathstrut_1F_{2}\bigg ( 1+k; \frac{\mu+ 2}{2}+k, \frac{\nu+ 3}{2}+k; -n^2 h(|u|)^2/4 \bigg ), \qquad u \in \R, \quad n \in \mathbb{N}. 
\end{eqnarray*}
From the argument in (\ref{fourier}) in concert with Lemma 1 in \cite{gneiting1}, we have that $b_n(u)$ is positive definite on the positive real line for each $n$. Additionally, arguments in \cite{Porcu-Zastavnyi-Bevilacqua} show that, for $\mu\ge k+4$, $b_n(u)$ is strictly decreasing in $n$. Application of Proposition 3.6 of \cite{berg-porcu} shows that (\ref{wen-space-time3}) is positive definite on $\S^3 \times \R$. 
\end{proof}

\section*{Acknowledgments}
We gratefully acknowledge the NASA scientists responsible for MERRA-2 products. 

\small 

\bibliographystyle{mywiley}
\bibliography{mybib}{}

\end{document}